\documentclass[reqno]{amsart}
\usepackage[utf8]{inputenc}
\usepackage{amssymb, amsmath, amsthm,bbm, color, enumerate, tikz, tikz-3dplot}

\usepackage{hyperref}
 \hypersetup{ hidelinks = true } 

\usepackage{makecell}
\setcellgapes{2pt}

\tdplotsetmaincoords{75}{15}

\usepackage[thinlines]{easytable}
\usetikzlibrary{patterns}

\usepackage{comment}

\numberwithin{equation}{section}

\newcommand{\R}{\mathbb{R}}
\newcommand{\Z}{\mathbb{Z}}
\newcommand{\N}{\mathbb{N}}

\newcommand{\ud}{\, \mathrm{d}}
\newcommand{\BL}{\mathrm{BL}}

\DeclareMathOperator{\I}{I}
\DeclareMathOperator{\II}{II}

\newcommand{\supp}{\mathrm{supp}\,}

\def\bbone{{\mathbbm 1}}

\theoremstyle{plain}
\newtheorem{theorem}{Theorem}[section]
\newtheorem{lemma}[theorem]{Lemma}
\newtheorem{proposition}[theorem]{Proposition}
\newtheorem{conjecture}[theorem]{Conjecture}

\newtheorem*{key estimate}{Key estimate}

\newtheorem*{hypothesis}{Induction Hypothesis}

\theoremstyle{remark}
\newtheorem*{remark}{Remark}
\newtheorem{remarkn}[theorem]{Remark}

\theoremstyle{definition}
\newtheorem{definition}[theorem]{Definition}

\newtheorem*{key example}{Key example}

\title[$L^p-L^q$ local smoothing via $k$-broad Fourier restriction]{$L^p-L^q$ local smoothing estimates for the wave equation via $k$-broad Fourier restriction}
\author{David Beltran}
\author{Olli Saari}
\date{\today}

\subjclass[2010]{35L05, 42B15, 42B20, 42B37}
\keywords{local smoothing, wave equation, $k$-broad estimates, decoupling}

\address{David Beltran: Department of Mathematics, University of Wisconsin, 480 Lincoln Drive, Madison, WI, 53706, USA. \url{https://orcid.org/0000-0001-6711-9741}}
\email{dbeltran@math.wisc.edu} 
	
\address{Olli Saari: Mathematical Institute, 
	University of Bonn,
	Endenicher Allee 60, 53115, Bonn,
	Germany. \url{https://orcid.org/0000-0003-1212-8100}}
	\email{saari@math.uni-bonn.de}

\begin{document}

\begin{abstract}
We explore the connection between $k$-broad Fourier restriction estimates and sharp regularity $L^p-L^q$ local smoothing estimates for the solutions of the wave equation in $\R^{n}\times \R$ for all $n \geq 3$ via a Bourgain--Guth broad-narrow analysis. An interesting feature is that local smoothing estimates for $e^{i t \sqrt{-\Delta}}$ are not invariant under Lorentz rescaling.
\end{abstract}

\maketitle

\section{Introduction}

Let $u$ denote the solution of the Cauchy problem for the wave equation in $\R^{n} \times \R$
\begin{equation*}
    \left\{ \begin{array}{l}
        (\partial_t^2 - \Delta)u(x,t) = 0  \\[3pt]
        u(x,0)  := f(x), \quad \partial_t u(x,0)  := 0.
    \end{array}
    \right.
\end{equation*}
It is well known that $u$ can be written in terms of the half-wave propagator
$$
e^{i t \sqrt{-\Delta}} f(x) := \frac{1}{(2\pi)^n} \int_{\R^n} e^{i (x \cdot \xi +  t |\xi|)} \widehat{f} (\xi)  \ud \xi,
$$
which satisfies the \textit{fixed-time} bounds  \cite{Peral1980, MiyachiWave}
\begin{equation}\label{wave propagator}
\| e^{i t \sqrt{-\Delta}} f \|_{L^p_{-\bar s_p}(\R^n)} \, \lesssim_t \, \| f \|_{L^p(\R^n)}\, ,  \qquad  \bar{s}_p:=  (n-1) \Big|\frac{1}{2}- \frac{1}{p}\Big| \,,
\end{equation}
for any $1 < p < \infty$ and any $t>0$, where the implicit constant is locally bounded in $t$. 
Here $L_s^{p}$ denotes the Bessel potential space,
and we refer to the end of the introduction for the rest of the notation. 
Whilst these bounds are sharp for each fixed $t$, Sogge \cite{Sogge91} observed that there exists some $\sigma>0$ such that
\begin{equation}\label{local smoothing conj}
\Big( \int_1^2 \| e^{i t \sqrt{-\Delta}} f \|_{L^p_{-\bar s_p + \sigma}(\R^n)}^p  \ud t \Big)^{1/p} \lesssim \| f \|_{L^p(\R^n)}
\end{equation}
holds for all $2 < p < \infty$. This regularity gain in $L^p$ satisfied by $e^{it \sqrt{-\Delta}}$ after a local integration in time is commonly referred to as the \textit{local smoothing phenomenon} of the wave equation. It is conjectured \cite{Sogge91} that \eqref{local smoothing conj} holds for all $\sigma < \sigma_p$ where
\begin{equation*}
    \sigma_p:= \begin{cases}
    \begin{array}{ll}
        1/p & \text{ if } \quad  \frac{2n}{n-1} \leq  p < \infty,  \\
        \bar s_p & \text{ if } \quad  \quad \,\, 2 < p \leq \frac{2n}{n-1}.
    \end{array}
    \end{cases}
\end{equation*}
The \textit{local smoothing conjecture}\footnote{It is also expected that endpoint regularity results with $\sigma=1/p$ should hold if $p>2n/(n-1)$: see \cite{HNS2011} for results in this direction if $n \geq 4$. Similarly, the forthcoming Conjecture \ref{conjecture:LpLq} could hold for endpoint regularity cases $\sigma=\sigma_{p,q}$. Such endpoint cases will not be considered in this paper.} %: see \cite{Lee2003} for results if $n=2$.}  
is at its strongest when $p=\frac{2n}{n-1}$. 
The remaining cases follow by interpolation against the fixed-time estimates \eqref{wave propagator}. 
More precisely, one interpolates \eqref{local smoothing conj} with the energy conservation identity
\begin{equation}\label{eq:energy}
\| e^{it\sqrt{-\Delta}} f \|_{L^2(\R^{n} \times [1,2])} = \| f \|_{L^2(\R^n)} 
\end{equation}
and the $L^\infty$ estimate (see for instance \cite[Chapter IX, \S4]{bigStein})
\begin{equation}\label{infty endpoint}
    \| e^{it\sqrt{-\Delta}} f \|_{L^\infty_{- (n-1)/2 - \varepsilon}(\R^{n} \times [1,2])} \lesssim_t \| f \|_{L^\infty(\R^n)},
\end{equation}
which holds for all $\varepsilon>0$.

The local smoothing conjecture has been studied 
in numerous papers ever since it was first posed in \cite{Sogge91}, see for instance \cite{Mockenhaupt1992,Wolff2000,Laba2002,Garrigos2009,Garrigos2010,HNS2011,Lee2013,BD2015,LeeLS}. When $n=2$, sharp results follow by the work of Guth, Wang and Zhang \cite{GWZ}.
They prove a sharp version of a reverse square function estimate considered by Mockenhaupt \cite{Mock93},
which then implies the conjecture by a slight modification of the methods of \cite{Mockenhaupt1992} and an application of Córdoba's sectorial square function \cite{Cordoba82}: see \cite[\S 6]{TV2}. 
When $n \geq 3$, the  conjecture holds for all $p \geq \frac{2(n+1)}{n-1}$ by the Bourgain--Demeter decoupling theorem \cite{BD2015} and the method of Wolff \cite{Wolff2000}.
See also \cite{GLMX} for partial results in the range $2 \leq p \leq \frac{2(n+1)}{n-1}$. 
Verification of the full local smoothing conjecture would imply affirmative answers to a number of other important open problems such as the Bochner--Riesz conjecture, the Fourier restriction conjecture and the Kakeya conjecture; see \cite{TaoBR} for further background.

This note focuses on an $L^p-L^q$ variant of the local smoothing conjecture. 
The fixed-time estimate \cite{Littman, Strichartz1970, Brenner}
\begin{equation}\label{eq:L1Linfty}
\| e^{i t \sqrt{-\Delta}} f \|_{L^\infty_{-(n+1)/2 - \varepsilon} (\R^n)} \lesssim_t \|f \|_{L^1(\R^n)}, \quad \varepsilon > 0,
\end{equation}
along with the complex interpolation method can be used to upgrade  \eqref{wave propagator} to the $L^p$-improving inequality
\begin{equation}\label{eq:LpLq fixed time}
\| e^{i t \sqrt{-\Delta}} f \|_{L^q_{-\bar s_{p,q}}(\R^n)} \, \lesssim_t  \, \| f \|_{L^p(\R^n)}\, ,  \quad  \bar{s}_{p,q}:=\left
\lbrace \begin{array}{ll}
        \bar s_q + \frac{1}{p} - \frac{1}{q}  & \text{ if } \quad  q \geq p' \\
        \bar s_p + \frac{1}{p} - \frac{1}{q}  & \text{ if } \quad  q \leq p'
    \end{array} \right . \,,
\end{equation}
valid for any $1 < p \leq q < \infty$ and any $t >0$, and where the implicit constant is locally bounded in $t$. Here $p'=p/(p-1)$. Similarly, any local smoothing estimate \eqref{local smoothing conj} can be interpolated with \eqref{eq:L1Linfty} to obtain $L^p(\R^n)- L^q(\R^n \times [1,2])$  estimates for $q\geq p$. This motivates the following conjecture \cite{SS1997,TV2}.

\begin{conjecture}[$L^p-L^q$ local smoothing conjecture]\label{conjecture:LpLq} For $n \geq 2$, the inequality
\begin{equation}\label{LpLq conjecture}
\Big(\int_1^2 \| e^{i t \sqrt{-\Delta}} f \|_{L^q_{-\bar{s}_{p,q} + \sigma}(\R^n)}^q  \ud t \Big)^{1/q} \lesssim \| f \|_{L^p(\R^n)}
\end{equation}
holds for all $\sigma < \sigma_{p,q}$ if $1< p \leq q < \infty$ and $p' < q$, where
\begin{equation*}
    \sigma_{p,q}:= \left \lbrace \begin{array}{ll}
        \frac{1}{q} & \text{ if } \quad  \frac{1}{q} \leq \frac{n-1}{n+1} \frac{1}{p'} %, \quad \frac{2n}{n-1} \leq q<\infty \\
        \\
        \frac{(n-1)}{2}\left(\frac{1}{p'} - \frac{1}{q} \right) & \text{ if } \quad  \frac{1}{q} \geq \frac{n-1}{n+1} \frac{1}{p'}%, \quad  2 \leq p \leq \frac{2n}{n-1}
    \end{array} \right . .
\end{equation*}
\end{conjecture}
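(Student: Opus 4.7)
The natural strategy is to reduce Conjecture \ref{conjecture:LpLq} to an extension-type inequality for the truncated light cone and then to attack that inequality via the Bourgain--Guth broad--narrow method, taking $k$-broad Fourier restriction for the cone as the principal external input. After a Littlewood--Paley decomposition and frequency localization to $|\xi|\sim\lambda$, one parabolically rescales the $\lambda^{-1}$-neighborhood of the cone and turns the target bound into an inequality of the shape
\begin{equation*}
\|Eg\|_{L^q(B_R)} \lesssim R^{\bar s_{p,q}-\sigma+\varepsilon}\|g\|_{L^p(A)}
\end{equation*}
for the cone extension operator $E$, functions $g$ on the annulus $A=\{1\le|\xi|\le 2\}$, and $R\sim\lambda$, with the goal of covering every $\sigma<\sigma_{p,q}$. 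The two branches in the definition of $\sigma_{p,q}$ reflect two distinct regimes: for large $q$ the gain is driven by the $1/q$ factor from time integration, while for moderate $q$ it is dictated by the sharp $L^p$-improving behavior of $E$, and the kink at $1/q=\tfrac{n-1}{n+1}\cdot 1/p'$ is exactly where the two mechanisms balance.

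To obtain this extension estimate, cover $B_R$ by balls $B_K$ of side $K$ and decompose the cone into $K^{-1}$-caps $\tau$. On each $B_K$ split $\|Eg\|_{L^q(B_K)}$ into a $k$-broad contribution, where the significant caps span at least $k$ transversal directions, and a narrow contribution, where they cluster near a $(k-1)$-plane. The broad contribution is bounded by a $k$-broad $L^p\to L^q$ restriction estimate for the cone, which is the key external input. For the narrow contribution the heavy caps lie essentially on a lower-dimensional subcone, and one closes the argument either by induction on the transversality parameter $k$ (or on $n$, reducing to the wave equation in one fewer spatial dimension) or by $\ell^p$-decoupling à la Bourgain--Demeter, which passes the estimate down to scale $R/K$ and closes a standard induction on scales. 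Summing the two contributions over $B_K$ and optimizing in $K$ should deliver the desired exponent.

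The main obstacle has two faces. First, the mixed $L^p\to L^q$ setting forces one to carry two independent Lebesgue exponents through the whole Bourgain--Guth iteration: most $k$-broad restriction statements in the literature are phrased in an $L^q\to L^q$ form and must be interpolated with \eqref{wave propagator} and \eqref{eq:L1Linfty} (or re-derived directly) to yield genuine mixed-norm inputs, and the exponent loss in the narrow decoupling step has to match the desired profile of $\sigma_{p,q}$ on the nose. Second, the failure of Lorentz invariance for $e^{it\sqrt{-\Delta}}$ restricted to $t\in[1,2]$, emphasized in the abstract, means that distinct points $(1/p,1/q)$ cannot be collapsed into a single critical case by a symmetry argument as one routinely does for the paraboloid via Strichartz-style scaling. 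Consequently, even granted sharp diagonal local smoothing at one exponent, the off-diagonal cases require a separate broad--narrow analysis, and locating the minimal $k$-broad exponent consistent with each $(p,q)$ and the currently available restriction technology is where the bulk of the technical difficulty should sit.
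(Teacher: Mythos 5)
First, be aware that the statement you set out to prove is a \emph{conjecture}: the paper establishes it only in the ranges of $(p,q)$ described in Theorems \ref{thm:LpLq} and \ref{thm:smoothing all k}. Your proposal, which relies on the same family of $k$-broad inputs, could at best reproduce those partial results, and you should state this explicitly rather than presenting the strategy as if it settled \eqref{LpLq conjecture}.

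Beyond that, your broad--narrow outline is the right template but contains a structural gap that would prevent the induction on scales from closing. In the narrow step you rescale $K^{-1}$-caps to pass to scale $R/K^2$; for the cone this is the Lorentz map $\mathrm{D}_K\circ\Upsilon_\omega$ of Lemma \ref{lemma:Lorentz}, and it does \emph{not} fix the phase $\varphi(\xi)=|\xi|$ — it sends it to a different homogeneous phase $\tilde h$. Fourier restriction estimates are blind to this because they are invariant under rotations of $\R^{n+1}$, but local smoothing estimates for $e^{it\sqrt{-\Delta}}$ are not. Your paragraph on Lorentz invariance identifies a symptom (exponent pairs cannot be collapsed by a Strichartz-style symmetry) but misses the essential consequence: the induction hypothesis must be formulated for the entire family $U_\varphi$ with $\varphi\in\Phi_{\mathrm{conic}}^+$ — indeed for the quantitative class $\mathbf{H}(D_1,D_2,\vec\mu,M,\varepsilon_\circ)$ of \S\ref{subsec:stronger}, which is stable under the rescaling — because the rescaled narrow pieces leave the single-operator class. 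Without that enlargement, the estimate you invoke at scale $R/K^2$ is simply not covered by your own hypothesis. Two further points where your sketch deviates from what works: there is no ready-made $L^p\to L^q$ $k$-broad estimate, and $\BL^q_{k,A}$ is not a norm, so one cannot interpolate directly. The paper instead uses the $L^2\to L^p$ broad estimate of Theorem \ref{thm:broad estimate} and a hypothetical $L^{\bar p_n}$ local smoothing estimate converted to a broad estimate (Proposition \ref{cor:k-broad smoothing}), interpolated via the reverse-H\"older decomposition of Lemmas \ref{lemma:decomposition RH}, \ref{lemma:interpolation kbroad} and the broad H\"older inequality (Lemma \ref{Holder kbroad}). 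Finally, the narrow part is not handled by descending to the wave equation in lower dimension but by a Bourgain--Demeter-type narrow decoupling for $K^2$-flat phases in $\R^{n+1}$ (Theorem \ref{thm:narrow decoupling}), a trivial decoupling to scale $(KR^{\delta/2})^{-1}$, and then Lorentz rescaling combined with the induction hypothesis on the enlarged class.
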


By the preceding discussion, 
validity of the conjecture for $q=p$ implies the cases $q > \max\{p,p'\}$ by interpolation with \eqref{eq:L1Linfty}; thus Conjecture \ref{LpLq conjecture} is a consequence of the $L^p-L^p$ result in \cite{GWZ} when $n=2$. When $q > p$, Conjecture \ref{conjecture:LpLq} is at its strongest on the \textit{critical line} 
\[\frac{1}{q}=\frac{n-1}{n+1}\frac{1}{p'};\] 
validity of the sharp regularity estimates \eqref{LpLq conjecture} for a pair $(p^*,q^*)$ there immediately implies, by interpolation with \eqref{eq:energy}, \eqref{infty endpoint} and \eqref{eq:L1Linfty}, sharp regularity $L^p-L^q$ local smoothing estimates for $(1/p,1/q) \in \mathfrak{Q}_{p^*,q^*} \backslash (\overline{P_0P_1}\cup\overline{P_1P_2})$, where $\mathfrak{Q}_{p^*,q^*}$ is the closed quadrangle with vertices
\begin{equation*}
P_0=(0,0), \qquad  P_1=(1,0), \qquad P_2=(1/2,1/2), \qquad P_*=(1/p^*,1/q^*).
\end{equation*}
This region of validity can further be extended to a hexagon with additional vertices at $(1/p_1,1/p_1)$ and $(1/p_2,1/p_2)$ if the conjecture is known to hold on the line $p=q$ for some $\frac{2n}{n-1} < p_1 < \infty$, $2 < p_2 < \frac{2n}{n-1}$.

It is of fundamental importance
that sharp regularity estimates on the critical line
can be obtained despite the full conjecture being open.
As a prime example we mention the Strichartz estimate \cite{Strichartz77}  
\begin{equation}\label{Strichartz cone}
\| e^{i t \sqrt{-\Delta}} f \|_{L^{\frac{2(n+1)}{n-1}} (\R^{n+1})} \lesssim \| f \|_{\dot{L}_{\frac{1}{2}}^{2}(\R^n)},
\end{equation}
which corresponds to the endpoint case $\sigma=\sigma_{p,q}$ in \eqref{LpLq conjecture} on the critical line for $q = \frac{2(n+1)}{n-1}$. Sharp $L^p-L^q$ local smoothing estimates  beyond \eqref{Strichartz cone} were first studied by Schlag and Sogge \cite{SS1997} when $n=2$.
Further improvements beyond $q=\frac{2(n+1)}{n-1}$ and in any dimension $n \geq 2$ were obtained in \cite{TV2, Lee2003, Lee2006} using the Wolff--Tao bilinear Fourier restriction estimates \cite{WolffCone, TaoCone} for the cone, which can be interpreted as local smoothing estimates via Plancherel's theorem. These bilinear estimates and their conjectured $k$-linear counterparts are of the form
\begin{equation}\label{eq:multilinear}
    \| \prod_{j=1}^k |e^{i t \sqrt{-\Delta}} f_j|^{1/k}\|_{L^{p}(B_R)} \lesssim_\varepsilon R^\varepsilon \prod_{j=1}^k \| f_j \|_{L^2(\R^n)}^{1/k}, \quad \textrm{$p\geq \bar p_{n,k} := \frac{2(n+k+1)}{n+k-1}$}
\end{equation}
where $2 \leq k \leq n+1$; $\supp \widehat{f}_j \subseteq \{\xi \in \R^n : 1 \leq |\xi| \leq 2 \}$ for all $1 \leq j \leq k$; the sets $\{\frac{\xi}{|\xi|} : \xi \in \supp \widehat{f}_j \}$ are separated; $B_R \subseteq \R^{n+1}$ denotes a ball of radius $R$
and the estimates are understood to hold for all $\varepsilon>0$ and all $R\ge 1$.
The only known cases are $k=2$ \cite{WolffCone, TaoCone}, 
$k=n$ \cite{Bejenaru2020} 
and $k=n+1$ \cite{BCT}. 
The remaining cases $3 \leq k < n$ are open
up to some partial positive results for $p \geq \frac{2k}{k-1}$ \cite{BCT}.

As the exponents $\bar p_{n,k}$ decrease with $k$, it is natural to explore if higher orders of multilinearity imply further progress on Conjecture \ref{conjecture:LpLq}. 
This line of investigation was considered by Lee \cite{LeeLS} for $n=2$ using 
the trilinear reduction of Lee and Vargas \cite{Lee2012}; see also the recent work \cite{HKL2021}. In this note, we further extend the multilinear approach to any dimension and any level of linearity in the case $q > p$. 
We remark that whereas partial results for $q=p$ using this method
were discussed in \cite{GLMX},
our focus is on sharp results with $q>p$.
Rather than working with $k$-linear estimates, we will work with their $k$-\textit{broad} variants (see \S\ref{sec:kbroad}), which hold in the full range $p \geq \bar p_{n,k}$. The idea of substituting the missing $k$-linear estimates by $k$-broad estimates goes back to the work of Guth \cite{Guth14} on Fourier restriction estimates for the paraboloid. Analogous results for conic surfaces have recently been obtained in \cite{OW,GLMX,Schippa}.  

We use a by now standard broad-narrow analysis from \cite{Guth16,BG}.
To do so,
we cannot restrict attention to the half-wave propagator $e^{i t\sqrt{-\Delta}}$ but we are forced to consider a larger class of operators that remains closed under Lorentz rescaling (see \S \ref{sec:Lorentz}).
Note that, unlike Fourier restriction estimates for the cone, local smoothing estimates for $e^{i t\sqrt{-\Delta}}$ are not invariant under Lorentz rescaling as they are not invariant under rotations in $\R^n \times \R$. As a consequence, one needs to use the $k$-broad estimates for perturbations of the light cone from \cite{GLMX,Schippa} instead of only those for the light cone from \cite{OW}.

\begin{figure}\label{figure:LpLq}
\includegraphics[scale=0.75]{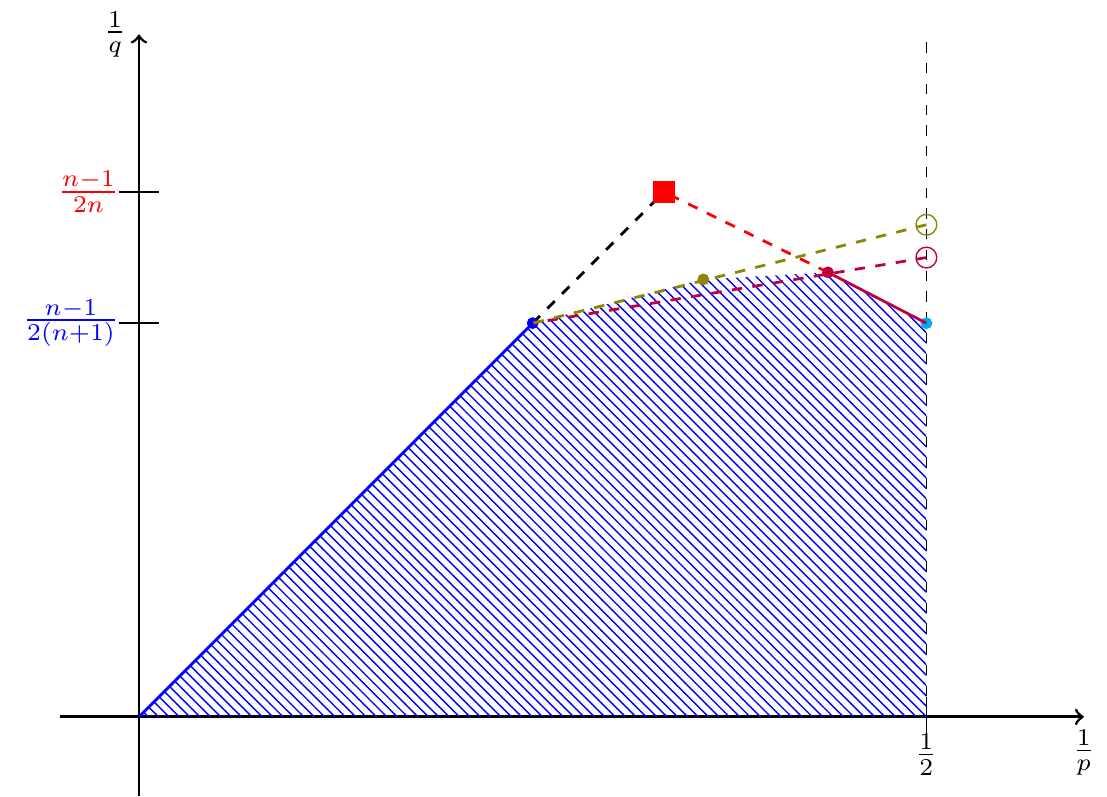}
\caption{$L^p-L^q$ local smoothing estimates for all $\sigma< \sigma_{p,q}$ for $\frac{1}{q} \leq \frac{n-1}{n+1}\frac{1}{p'}$, $p \geq 2$ hold in the shaded region $\mathfrak{P}_n$. The critical point of the local smoothing conjecture is depicted as a red square, and the descending red dashed line is the critical line of the $L^p-L^q$ conjecture. The dark blue point follows from the decoupling theorem \cite{BD2015}. The hollow circles denote $k$-broad restriction estimates in \cite{OW, GLMX} $k = 3,4$. The estimates at the purple and olive points are the content of our Theorem \ref{thm:smoothing all k}. Higher degrees of multilinearity imply further points, but they have been left out from the picture for clarity.}
\end{figure}

Our first result is a sharp $L^p-L^q$ local smoothing estimate on the critical line $\frac{1}{q}=\frac{n-1}{n+1} \frac{1}{p'}$. For future applicability, we state the next theorem in terms of the best exponent for which there is sharp regularity results in Conjecture \ref{conjecture:LpLq} when $q=p$. Such a statement requires the aforementioned larger class of operators, which we introduce in what follows. Let $\Phi_{\mathrm{conic}}^+$ denote the class of functions $\varphi:\R^n \to \R$ smooth away from $0$, homogeneous of degree $1$ and satisfying that $\partial_{\xi \xi}^2 \varphi (\xi)$ has $n-1$ positive eigenvalues on $\R^n \backslash \{0\}$. Given $\varphi \in \Phi_{\mathrm{conic}}^+$, define the wave-like propagator
$$
U_{\varphi} f (x,t):= \frac{1}{(2\pi)^n} \int_{\R^n} e^{i ( x \cdot \xi + t\varphi (\xi))} \widehat{f}(\xi) \ud \xi.
$$
Note that a standard computation reveals that $\varphi(\xi)=|\xi| \in \Phi^+_{\mathrm{conic}}$ and thus $e^{i t \sqrt{-\Delta}}$ is of the form $U_\varphi$. It is well-known \cite{MiyachiWave, Brenner} that if $\varphi \in \Phi^+_{\mathrm{conic}}$, $U_{\varphi}$ continues to satisfy \eqref{wave propagator}, \eqref{eq:L1Linfty} and \eqref{eq:LpLq fixed time}, that is,
\begin{equation}\label{eq:LpLq fixed time Us}
\| U_\varphi f \|_{L^q_{-\bar s_{p,q}}(\R^n)} \, \lesssim_t  \, \| f \|_{L^p(\R^n)}
\end{equation}
for $1 < p \leq q < \infty$. One can formulate Conjecture \ref{conjecture:LpLq} for $U_\varphi$, which is also known to hold for $q= p \geq \frac{2(n+1)}{n-1}$ by \cite{BD2015}.

\begin{theorem}\label{thm:LpLq}
Let $n \geq 2$ and let $\bar{p}_n \geq 2n/(n-1)$ be the smallest number $p$ for which
\begin{equation*}%\label{eq:general phase LS pp}
    \| U_{\varphi} f \|_{L^p_{-\bar s_p + \sigma }(\R^n \times [1,2])} \lesssim  \| f \|_{L^p(\R^n)}
\end{equation*}
holds for all $\sigma < 1/ p$ and all $\varphi \in \Phi_{\mathrm{conic}}^+$. Then Conjecture \ref{conjecture:LpLq} holds for all $1 < p \leq q < \infty$ satisfying
\[q \geq  \frac{2  \bar p_n \left( n^2 + 3n-1\right)-4 n (n+4)}{(n-1)   ( (n+2) \bar p_n - 2(n + 3))  }, \qquad \frac{1}{q} = \frac{n-1}{n+1} \frac{1}{p'}, \]
and, more generally, 
\begin{equation*}
    \| U_{\varphi} f \|_{L^q_{-\bar s_{p,q} + \sigma }(\R^n \times [1,2])} \lesssim  \| f \|_{L^p(\R^n)}
\end{equation*}
holds for all $\sigma < 1/q$, $p$ and $q$ as above and all $\varphi \in \Phi_{\mathrm{conic}}^+$.

In particular, as $\bar{p}_n \leq 2(n+1)/(n-1)$ by \cite{BD2015}, then Conjecture \ref{conjecture:LpLq} holds for 
\[q \geq \frac{2(n^2+6n-1)}{(n-1)(n+5)}, \quad \frac{1}{q}=\frac{n-1}{n+1} \frac{1}{p'}.\]
\end{theorem}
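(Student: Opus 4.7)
The plan is to follow the Bourgain--Guth broad-narrow strategy \cite{Guth16, BG} adapted to the wave-like propagators $U_\varphi$ with $\varphi \in \Phi^+_{\mathrm{conic}}$. After a Littlewood--Paley decomposition and a parabolic rescaling, it suffices to prove with a loss of $R^\varepsilon$ an $L^p(\R^n) \to L^q(B_R)$ estimate for $U_\varphi$ acting on unit-frequency data, where $R$ is essentially the reciprocal of the original frequency scale. This target is established by strong induction on $R$. Fix a large parameter $K$, partition the unit annulus into $K^{-1}$-caps $\tau$, and decompose $f = \sum_\tau f_\tau$. On each ball $B_{K^2}$ in space-time, a pointwise Bourgain--Guth inequality splits $|U_\varphi f|$ into a $k$-broad piece, where contributions from $k$ caps pointing in quantitatively transverse directions dominate, and a narrow piece in which all non-negligible caps lie in a neighborhood of a $(k-2)$-dimensional affine subspace of $\R^n$.

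For the broad term, I would appeal to the $k$-broad restriction estimate for perturbations of the cone from \cite{GLMX, Schippa}, valid at the sharp exponent $\bar p_{n,k}$ for any $\varphi \in \Phi^+_{\mathrm{conic}}$. This yields an $L^2 \to L^{\bar p_{n,k}}$ bound with $K^{O(1)} R^\varepsilon$ loss; interpolating with the fixed-time bound \eqref{eq:LpLq fixed time Us} upgrades the $L^2$ input to an $L^p$ input and lands on the critical line $\frac{1}{q} = \frac{n-1}{n+1}\cdot \frac{1}{p'}$. The narrow term is treated cap by cap via Lorentz rescaling. The crucial observation, developed in \S\ref{sec:Lorentz}, is that $\Phi^+_{\mathrm{conic}}$ is closed under Lorentz rescaling: after the change of variables each cap contribution is again of the form $U_{\tilde\varphi}$ for some $\tilde\varphi \in \Phi^+_{\mathrm{conic}}$, now acting on a space-time region of reduced size $R' \sim R/K^2$. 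To this rescaled piece one applies the hypothesized sharp $L^{\bar p_n} - L^{\bar p_n}$ local smoothing estimate, interpolates with \eqref{eq:LpLq fixed time Us} to pass to $L^p - L^q$ on the critical line, rescales back, and sums over caps using orthogonality.

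Combining the broad and narrow outputs and iterating the resulting inequality in $R$ closes the induction, provided the exponents from the two sides match; optimizing the multilinearity parameter $k$ (equivalently, $\bar p_{n,k}$) against $\bar p_n$ yields the stated threshold on $q$, and substituting the decoupling bound $\bar p_n \le 2(n+1)/(n-1)$ of \cite{BD2015} produces the explicit numerical corollary. The main obstacle will be the narrow step: performing Lorentz rescaling forces one to leave the class of half-wave propagators, which is precisely why the hypothesis on $\bar p_n$ must be stated uniformly for every $\varphi \in \Phi^+_{\mathrm{conic}}$. A secondary delicate point is that the $k$-broad estimate is naturally $L^2$-based while the target is an $L^p-L^q$ bound on the critical line, so the interpolations against the fixed-time bounds \eqref{eq:LpLq fixed time Us} must be arranged with care so that the broad and narrow exponents align and no regularity is lost in the cap summation.
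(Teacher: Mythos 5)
Your proposal correctly identifies the overall architecture of the argument: the reductions to a localised estimate for the quantitative class of phases, the Bourgain--Guth broad--narrow dichotomy, the $k$-broad input from \cite{GLMX,Schippa} for perturbed cones, and the central role of Lorentz rescaling together with the observation that one must work in the class $\Phi^+_{\mathrm{conic}}$ precisely because half-wave propagators are not closed under that rescaling. However, there are two genuine gaps in the way you handle the two halves of the dichotomy.

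For the broad term, you propose to interpolate the $L^2$-based $k$-broad estimate directly against the fixed-time $L^p$-improving bound \eqref{eq:LpLq fixed time Us}. This is not available: the $k$-broad quantities are not norms and do not interpolate against ordinary Lebesgue estimates. What the paper does instead is produce a \emph{second} $k$-broad estimate, Proposition \ref{cor:k-broad smoothing}, by running the hypothesized $(\bar p_n,\bar p_n,1/\bar p_n)$ local smoothing estimate through the definition of the broad norm, and then interpolates the two $k$-broad bounds using the logarithmic-convexity (H\"older) inequality of Lemma \ref{Holder kbroad}. Even this only becomes an honest interpolation after decomposing the input $f$ into level sets satisfying a reverse H\"older inequality (Lemmas \ref{lemma:interpolation kbroad} and \ref{lemma:decomposition RH}); without that decomposition, the H\"older step for $\BL$-quantities runs in the wrong direction. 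The resulting exponent pair $(p,q)$ is then determined by the \emph{interpolation line} \eqref{eq:interpolationline}, not by the critical line; the stated threshold comes from intersecting \eqref{eq:interpolationline} with the critical line \eqref{eq:criticalline}.

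For the narrow term, your plan is to Lorentz rescale each cap, apply the hypothesized $L^{\bar p_n}$ estimate, interpolate with \eqref{eq:LpLq fixed time Us}, rescale back, and sum over caps ``using orthogonality.'' Two things go wrong here. First, at the exponent $q$ in question there is no $\ell^2$ orthogonality in $L^q$; what makes the cap summation work is the narrow decoupling inequality (Theorem \ref{thm:narrow decoupling}, the flat-phase extension of Harris' estimate), which exploits that in the narrow regime only $O(K^{k-3})$ plates contribute, and this is exactly what produces the crucial negative power of $K$ that closes the induction. This, in turn, is why the proof must first reduce to $\lambda^{\tilde\delta}$-flat phases (Definition \ref{def:flat} and Remark \ref{rem:flat}). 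Second, after Lorentz rescaling you should apply the \emph{induction hypothesis} — the target $(p,q)$ estimate assumed at scale $R' = R/K^2$ — not the hypothesized $L^{\bar p_n}$ estimate. Plugging in only what is already known for $\bar p_n$ reproduces the current state of the art rather than the improved bound, and there is then no mechanism by which the iteration produces a strictly better exponent: the whole point of the induction on scales is that the narrow contribution is controlled by $\bar C_\varepsilon (R/K^2)^{\beta+\varepsilon}$ times a compensating negative power of $K$, so that the bootstrap closes.
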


For $n \geq 3$ our results are an improvement over the estimates obtained by bilinear methods in \cite{Lee2006}, which implied Conjecture \ref{conjecture:LpLq} for $q \geq \frac{2(n^2+2n-1)}{(n-1)(n+1)}$, $\frac{1}{q}=\frac{n-1}{n+1} \frac{1}{p'}$.

Theorem \ref{thm:LpLq} is proved using $3$-broad estimates only. 
The use of higher degrees of multilinearity in the proof 
would cause the method to become increasingly inefficient on the critical line. 
However, higher orders of multilinearity can be used away from the critical line. 
This is the content of our next theorem, from which Theorem \ref{thm:LpLq} follows by setting $k=3$.

\begin{theorem}\label{thm:smoothing all k}
Let $n \geq 2$ and let $\bar{p}_n \geq 2n/(n-1)$ be the smallest number $p$ for which
\begin{equation*}
    \| U_{\varphi} f \|_{L^p_{-\bar s_p + \sigma }(\R^n \times [1,2])} \lesssim  \| f \|_{L^p(\R^n)}
\end{equation*}
holds for all $\sigma < 1/ p$ and all $\varphi \in \Phi_{\mathrm{conic}}^+$.
Then Conjecture \ref{conjecture:LpLq} holds for all pairs $(\bar p(k), \bar q(k))$,
\begin{align*}
    \bar p(k) &= \frac{ 2 \bar p_n \left(2n^2 +k(n+4) -k^2+3 n-5\right)-4 \left(n+k+1\right)\left(2n-k+3\right)}{  \bar p_n \left(n+k+1\right)\left(2n-k+1\right) -2 \left(2n^2 +k(n-2) -k^2 +5 n+9\right)}
, \nonumber \\
\bar q(k) &= \frac{ 2 \bar p_n \left(2n^2 +k(n+4) -k^2+3 n-5\right)-4 \left(n+k+1\right)\left(2n-k+3\right)}{ \bar p_n \left(n+k-1\right)\left(2n-k+1\right) -2 \left( 2n^2 +kn -k^2+n+3\right)}.
\end{align*}
with $k \in \{2, \ldots, n+1\}$. 

Furthermore, Conjecture \ref{conjecture:LpLq} holds for all
$(1/p,1/q) \in (\mathfrak{P}_n \cup \mathfrak{T}_n) \backslash (\overline{P_0P_1}\cup\overline{P_1P_2})$
where $\mathfrak{P}_n$ is the convex hull of
\[ (1/\bar p(k), 1/\bar q(k)), \quad (1/\bar p_n, 1/\bar p_n), \quad P_0=(0,0), \quad P_1=(1,0),
\]
see Figure \ref{figure:LpLq}.
The set $\mathfrak{T}_n$ is the triangle formed by 
\[(1/\bar p(3), 1/\bar q(3)), \quad P_1 = (1,0), \quad P_2=(1/2,1/2).\]
More generally,
\begin{equation*}
    \| U_{\varphi} f \|_{L^q_{-\bar s_{p,q} + \sigma }(\R^n \times [1,2])} \lesssim  \| f \|_{L^p(\R^n)}
\end{equation*}
holds for all $\sigma < 1/q$, $p,q$ as above and all $\varphi \in \Phi_{\mathrm{conic}}^+$.

In particular, as $\bar{p}_n \leq 2(n+1)/(n-1)$ by \cite{BD2015}, we have the values
$$
    \bar p(k)=\frac{2(n^2+2nk-k^2+3k-1)}{n^2+2nk-k^2-k+5} \quad \text{and} \quad \bar q(k)=\frac{2(n^2+2nk-k^2+3k-1)}{n^2+2nk-k^2+k-2n+1}.
$$
\end{theorem}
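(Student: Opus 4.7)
The plan is to establish the critical-line bounds at the points $(\bar p(k), \bar q(k))$ by running a Bourgain--Guth broad-narrow analysis over the full class $\Phi_{\mathrm{conic}}^+$ simultaneously, and then to deduce the hexagonal region $\mathfrak{P}_n \cup \mathfrak{T}_n$ by interpolation with the diagonal bound at $\bar p_n$ and the fixed-time inequality \eqref{eq:LpLq fixed time Us}. By Littlewood--Paley reduction and parabolic rescaling, it suffices to prove
\[
\| U_\varphi f \|_{L^q(B_R)} \lesssim_\varepsilon R^{\bar s_{p,q} - 1/q + \varepsilon} \| f \|_{L^p(\R^n)}
\]
for all $f$ with $\widehat f$ supported in the unit annulus, all balls $B_R \subset \R^{n+1}$ of radius $R \geq 1$, and uniformly in $\varphi \in \Phi_{\mathrm{conic}}^+$. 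Denote the optimal constant in this inequality by $A_n^{p,q}(R)$.

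Fix a scale $K \in [1, R^{1/2}]$ and decompose the unit annulus into $K^{-1}$-caps $\tau$. On each $K^2$-ball $B' \subset B_R$, a standard Bourgain--Guth pigeonhole splits the contributions into a $k$-broad part, where $k$ caps with pairwise $K^{-1}$-transverse directions all contribute comparably, and a narrow part, where the significant mass lies in a $(k-1)$-plate of caps. On the broad part I would apply the $k$-broad Fourier restriction estimate for perturbations of the cone from \cite{GLMX,Schippa}, valid in the sharp range $p \geq \bar p_{n,k} = 2(n+k+1)/(n+k-1)$; converting the resulting $L^2$-based broad bound into an $L^p$-based one via H\"older over caps and Minkowski delivers the broad contribution at the target pair $(\bar p(k), \bar q(k))$.

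For the narrow contribution I would Lorentz rescale each cap back to unit size. This is the point where the enlarged class $\Phi_{\mathrm{conic}}^+$ is essential: the rescaled phase $\tilde\varphi$ is in general not the cone phase but still belongs to $\Phi_{\mathrm{conic}}^+$ (see \S\ref{sec:Lorentz}), so the induction has to be closed uniformly over the whole family. The rescaled problem sits at scale $R/K^2$, which allows me to invoke either $A_n^{p,q}(R/K^2)$ (induction on scales) or the diagonal base-case bound at exponent $\bar p_n$ supplied by the hypothesis of Theorem \ref{thm:smoothing all k}. Combining the broad and narrow contributions yields a recursive inequality of the form
\[
A_n^{p,q}(R) \leq C_\varepsilon R^\varepsilon \bigl( K^{\alpha} R^{\beta} + K^{\gamma} A_n^{p,q}(R/K^2) \bigr),
\]
whose exponents depend explicitly on $n$, $k$, $\bar p_n$ and $\bar p_{n,k}$; choosing $K = R^\delta$ with $\delta$ arbitrarily small and iterating closes the induction, and balancing the broad and narrow exponents produces precisely the values $\bar p(k), \bar q(k)$ stated in the theorem.

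To cover the full region $(\mathfrak{P}_n \cup \mathfrak{T}_n) \setminus (\overline{P_0 P_1} \cup \overline{P_1 P_2})$ I would interpolate the critical-line points $(1/\bar p(k), 1/\bar q(k))$ against the diagonal bound at $\bar p_n$ and the trivial endpoints $P_0, P_1$ to obtain $\mathfrak{P}_n$, and then interpolate further with the fixed-time inequality \eqref{eq:LpLq fixed time Us} to pick up the triangle $\mathfrak{T}_n$. The $R^\varepsilon$ loss is absorbed into the strict inequality $\sigma < 1/q$ by summing a geometric series over Littlewood--Paley annuli. The principal obstacle is the careful bookkeeping of exponents in the broad-narrow recursion, made more delicate by the non-Lorentz-invariance of local smoothing for $e^{it\sqrt{-\Delta}}$: it is precisely this non-invariance that compels one to formulate the induction hypothesis uniformly over $\Phi_{\mathrm{conic}}^+$ and to draw the broad input from the perturbed-cone variants of \cite{GLMX,Schippa} rather than the cone-only estimate of \cite{OW}.
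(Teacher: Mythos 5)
Your high-level architecture (reduce to a local estimate, Bourgain--Guth broad-narrow decomposition over $K^{-1}$-caps, Lorentz rescale the narrow part, induct on scales, interpolate to fill out the regions) matches the paper, but two of your specific technical steps are wrong in ways that would prevent the proof from closing, and one necessary ingredient is missing entirely.

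First, you place the hypothesis on $\bar p_n$ in the narrow part, saying you would ``invoke either $A_n^{p,q}(R/K^2)$ (induction on scales) or the diagonal base-case bound at exponent $\bar p_n$.'' In fact the $\bar p_n$ hypothesis must enter the \emph{broad} part. The $k$-broad estimate of Theorem \ref{thm:broad estimate} is an $L^2 \to \BL_{k,A}^{\bar p_{n,k}}$ bound, and to move the input space to $L^p$ with $p>2$ one interpolates it against the $L^{\bar p_n} \to \BL_{k,1}^{\bar p_n}$ bound of Proposition \ref{cor:k-broad smoothing}, which is precisely where the hypothetical sharp diagonal estimate at $\bar p_n$ is consumed. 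The exponents $\bar p(k), \bar q(k)$ are then read off from the interpolation line $\frac{1}{p}-\frac{1}{\bar p_n}=(n+k+1)(\frac{1}{2}-\frac{1}{\bar p_n})(\frac{1}{p}-\frac{1}{q})$. Your phrase ``converting the resulting $L^2$-based broad bound into an $L^p$-based one via H\"older over caps and Minkowski'' cannot do this job: such a step does not bring in $\bar p_n$ at all, so your exponents would not depend on $\bar p_n$ and could not reproduce the stated formulas. Moreover, $\|\cdot\|_{\BL_{k,A}^p}$ is not a genuine norm and does not interpolate by abstract duality; the paper has to go through the H\"older inequality for $k$-broad norms (Lemma \ref{Holder kbroad}) together with the reverse-H\"older decomposition $f=\sum_\nu f^\nu + e$ of Lemma \ref{lemma:decomposition RH}, which you do not mention.

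Second, you omit narrow decoupling. In the narrow part there are $O(K^{k-3})$ caps accumulating on a $(k-1)$-plane. Triangle inequality plus H\"older would incur a factor $K^{(k-3)(1-1/q)}$, which is too large to iterate. The paper instead applies the narrow decoupling of Theorem \ref{thm:narrow decoupling} to pass to an $\ell^2 L^q$ structure at cost $K^\varepsilon$, so that H\"older over caps costs only $K^{(k-3)(\frac12-\frac1q)}$; this smaller loss is exactly what the condition $(k-3)(\frac12-\frac1q)+\frac{n+1}{q}-\frac{n-1}{p'}\leq 0$ controls when closing the induction, and it is where the values $\bar p(k), \bar q(k)$ away from the critical line come from. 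Narrow decoupling in turn requires the phase to be $K^2$-flat, which necessitates the preliminary reduction (via a coarse decomposition into $\lambda^{-\tilde\delta}$-plates and Lorentz rescaling) to phases that are $\lambda^{\tilde\delta}$-flat before the main induction begins; your sketch does not address this flatness reduction, and without it Theorem \ref{thm:narrow decoupling} would not apply.
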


We remark that the sharp regularity estimates from Theorem \ref{thm:smoothing all k} can be interpolated against any current non-sharp regularity $L^p-L^p$ local smoothing estimates (such as those in \cite{GLMX}) to obtain partial results in the exterior of $\mathfrak{T}_n$.

We finish the introduction with a contextual remark. 
One of our original motivations was to investigate how close the state of art in $L^p-L^q$ local smoothing estimates for $e^{i t \sqrt{-\Delta}}$ is from solving a problem that was left open in our earlier joint work with Ramos \cite{BRS2018}: Let $n = 4$ and let $\sigma$ be the normalised surface measure of the unit sphere in $\R^{n}$. Does there exist $p \in (1,\infty)$ and $\alpha \in [1,n-1)$ such that 
\[f \mapsto \sup_{t > 0} |t^{\alpha} \sigma_t * f| \]
maps $L^{p}$ to a first order Sobolev space? The question has a positive answer if sharp $L^{p}- L^{q}$ local smoothing estimates hold for $q \geq 3 - 1/6 - \epsilon $. Using the best known estimates in Theorem \ref{thm:LpLq}, we only get sharp local smoothing for $q \geq 3- 1/9$ and hence miss the threshold by $1/18$.

\subsubsection*{Structure of the paper} We begin by making some standard reductions in \S \ref{sec:initial}, which reduce Theorem \ref{thm:smoothing all k} to the upcoming Theorem \ref{prop:main prop LpLq}, which is a local estimate for functions with compact Fourier support that is well separated from the origin. In \S \ref{sec:Lorentz} we address the Lorentz rescaling, which is a main ingredient in the proof of Theorem \ref{thm:smoothing all k} and the reason for introducing the class of phase functions $\Phi_{\mathrm{conic}}^+$. The concept of a $k$-broad norm is recalled in \S \ref{sec:kbroad} and in \S \ref{subsec:narrow decoupling} we present a narrow decoupling for the operators $U_\varphi$. The proof of Theorem \ref{thm:smoothing all k} is presented in \S \ref{sec:proof}.

\subsubsection*{Notation}
Given $R \geq 1$, $B_R^n$ denotes a ball of radius $R$ in $\R^n$ and $B_R$ denotes a ball of radius $R$ in $\R^{n} \times \R$. Given a measurable set $A \subset \R^{n+1}$, $A^c$ denotes its complementary set. The notation $A \lesssim B$ is used if $A \leq C B$ for some constant $C>0$. If the constant $C$ depends on a certain list of relevant parameters $L$, we use the notations $C=C_L$ and $\lesssim_L$. 
The case of dimension and the integrability parameters $p$ and $q$ may also be suppressed from the notation.
The relations $A \gtrsim_L B$ and $A \sim_L B$ are defined similarly. 

For a Schwartz function $f$,
we define the Fourier transform with the normalisation 
\[ \widehat{f}(\xi) = \int_{\R^n} e^{-i x \cdot \xi} f(x) \ud x . \]
As we are most of the time concerned with a priori estimates with Schwartz data,
we omit the standing assumption of a function $f$ being Schwartz in the statements of theorems and lemmata.
We only mention the assumed regularity if it is other than Schwartz.

A weight function adapted to a ball $B_R \subset \R^{n+1}$ of radius $R$ and centre $c$ is defined as
\[ w_{B_R}^N (z) = \Big( 1 + \Big(\frac{|z-c|}{R} \Big)^2 \Big)^{-N/2}, \]
where $N$ is a large dimensional constant. 

Given $1 < p \leq q < \infty$, $p'\leq q$, $0 < \bar \sigma \leq \sigma_{p,q}$ and $\varphi \in \Phi_{\mathrm{conic}}^+$, we say that there is $(p,q, \bar \sigma)$ \textit{local smoothing} for $U_\varphi$ or that a $(p,q,\bar \sigma)$ \textit{local smoothing estimate} for $U_\varphi$ holds if \begin{equation*}
    \| U_{\varphi} f \|_{L^q_{-\bar s_{p,q} + \sigma }(\R^n \times [1,2])} \lesssim  \| f \|_{L^p(\R^n)}
\end{equation*} holds for all $\sigma < \bar \sigma$. If $\bar \sigma=\sigma_{p,q}$, we say that there is \textit{sharp regularity $(p,q)$ local smoothing} for $U_\varphi$.

\subsubsection*{Acknowledgements} 
We would like to thank Chuanwei Gao, Bochen Liu, Changxing Miao, Jiqiang Zheng and Yakun Xi for communications regarding their related works  \cite{GMZ, GLMX} concerning the case $q=p$ and the current manuscript. D.B. was supported by the NSF grant DMS-1954479. O.S. was supported by the Deutsche Forschungsgemeinschaft (DFG, German Research Foundation) under Germany's Excellence Strategy – EXC-2047/1 – 390685813 as well as the SFB 1060. %This project was finalised during D.B.'s participation in the research program ``Interactions between Geometric measure theory, Singular integrals, and PDEs", hosted by the Hausdorff Institute of Mathematics.

\section{Initial reductions}\label{sec:initial}
Before proceeding with the proof of Theorem \ref{thm:smoothing all k} we perform some standard reductions which are useful for showing that there is $(p,q, \bar \sigma)$ local smoothing for $U_\varphi$.

\subsection{Dyadic decomposition}\label{subsec:dyadic}
Given $\varphi \in \Phi_{\mathrm{conic}}^+$, the first step is to break up the operator $U_\varphi$ into pieces which are Fourier supported on dyadic annuli. Let  $\zeta \in C_c^\infty(\R)$  with $\supp \zeta  \subseteq [1/2, 2]$ be such that $\sum_{k \in \Z} \zeta(2^{-k} r) = 1$ for all $r>0$. Define $\eta(\xi) = \zeta (|\xi|)$ for $\xi \in \R^n$. Thus,
$$
U_\varphi f (x,t) = U_\varphi (\check{\tilde{\eta}} \ast f ) (x,t) + \sum_{k \geq 0} U_\varphi (\check{\eta}_k \ast f ) (x,t)
$$
where $\eta_k(\xi):= \eta (2^{-k} \xi)$ and $\tilde{\eta}:= \sum_{k < 0} \eta_k$. An elementary integration by parts argument quickly reveals that the first term satisfies 
\[\| U_\varphi (\check{\tilde{\eta}} \ast f) \|_{L^q(\R^n \times [1,2])} \lesssim \| f \|_{L^p(\R^n)}\] 
for all $1 \leq p \leq q \leq \infty$. Thus, there is $(p,q,\bar \sigma)$ local smoothing for $U_\varphi$ if
\begin{equation}\label{eq:goal LpLq}
    \| U_\varphi (\check{\eta}_k \ast f ) \|_{L^q(\R^n \times [1,2])} \lesssim_{\varepsilon} 2^{k (\bar s_{p,q} - \bar \sigma + \varepsilon)} \| f \|_{L^p(\R^n)}
\end{equation}
holds for all $\varepsilon>0$ with the implicit constant uniform in $k \geq 0$.
By rescaling and setting $\lambda=2^k$, \eqref{eq:goal LpLq} is equivalent to showing
\begin{equation}\label{eq:goal LpLq scaled wave form}
    \| U_\varphi (\check{\eta} \ast f ) \|_{L^q(\R^n \times [\lambda,2\lambda])} \lesssim_{\varepsilon} \lambda^{ \beta + \varepsilon} \| f \|_{L^p(\R^n)}
\end{equation}
uniformly in $\lambda \geq 1$,
where
\[
\beta=\bar s_{p,q} - \bar \sigma + \frac{n+1}{q} - \frac{n}{p}=(n-1)\Big(\frac{1}{2}-\frac{1}{p}\Big) + \frac{1}{q} - \bar \sigma .
\]

We further note that the best constant in \eqref{eq:goal LpLq scaled wave form}
is comparable to the best constant in 
\begin{equation}\label{eq:goal LpLq scaled wave form until zero}
    \| U_\varphi (\check{\eta} \ast f ) \|_{L^q(\R^n \times [-2\lambda,2\lambda])} \lesssim_{\varepsilon} \lambda^{ \beta + \varepsilon} \| f \|_{L^p(\R^n)}
\end{equation}
uniformly in $\lambda \geq 1$. Indeed, set $\lambda = 2^k$ with $k \ge 0$.
By rescaling
\begin{multline*}
\| U_\varphi (\check{\eta}\ast f ) \|_{L^q(\R^n \times [0,2\lambda])}^{q}
    \le \sum_{j=0}^{\infty} \| U_\varphi (\check{\eta} \ast f ) \|_{L^q(\R^n \times [2^{-j}\lambda,2^{-j+1}\lambda])}^{q}\\
    \le  \sum_{j=0}^{k} \| U_\varphi (\check{\eta} \ast f ) \|_{L^q(\R^n \times [2^{-j}\lambda,2^{-j+1}\lambda])}^{q} \\
    + \sum_{j=k+1}^{\infty} 2^{-(j-k)(n+1)} \| U_\varphi (\check{\tilde{\eta}} \ast f_{j-k} ) \|_{L^q(\R^n \times [1,2])}^{q}
\end{multline*}
where
\[ \widehat{f}_j(\xi) = 2^{jn} \eta(2^{j}\xi) \widehat{f}(2^{j}\xi). \]
Note that we can add for free the Fourier localisation given by $\tilde{\eta}$.
By \eqref{eq:goal LpLq scaled wave form} 
the first term admits the desired bound by geometric summation.
By the elementary integration by parts bound $\|U_\varphi (\check{\tilde{\eta}}\ast f_{j-k}) \|_{L^q(\R^n \times [1,2])}\lesssim \| f_{j-k} \|_{L^q(\R^n)}$ and Bernstein's inequality, that is, 
\[\| f_{j-k} \|_{L^q(\R^n)} \lesssim 2^{-(j-k)n(1/p-1/q)} \| f_{j-k} \|_{L^p(\R^n)},\]
the second term admits the bound 
\[\sum_{j=k+1}^{\infty} 2^{-(j-k)} \|f \|_{L^p(\R^n )}^{q}  \]
which is also acceptable.
We conclude that the best constant in \eqref{eq:goal LpLq scaled wave form until zero}
is controlled by the best constant in \eqref{eq:goal LpLq scaled wave form}.
The other direction is immediate.

\subsection{A quantitative family of wave propagators}\label{subsec:stronger}

In \S\ref{sec:Lorentz} we will show that the class of operators $\{U_{\varphi}:  \varphi \in \Phi_{\mathrm{conic}}^+\}$ is invariant under Lorentz rescaling. To show this, it will be convenient to work with a quantitative version of the class $\Phi_{\mathrm{conic}}^+$.

Fix parameters $D_1, D_2>0, \vec{\mu}=(\mu_{\min},\mu_{\max} ) \in \R^2_+, M \geq 100n$ and $\varepsilon_\circ>0$. Let $b \in C_c^\infty(\R^n)$ be supported in 
\begin{equation*}
\Xi:=\{ \xi \in \R^n : 1/2 \leq \xi_1 \leq 2 , \,\,  |\xi_j| \leq |\xi_1| \,\, \text{for all $2 \leq j \leq n$}\}
\end{equation*}
satisfying
\begin{itemize}
    \item[B1)] $|\partial^\gamma_\xi b(\xi)| \leq D_1$ for all $\gamma \in \N_0^n$ such that $|\gamma| \leq M$.
\end{itemize}
Let $h: \R^n \to \R$ be a smooth function homogeneous of degree 1 satisfying
\begin{itemize}
    \item[H1)] $h(1,0')=\partial_\xi h (1,0')=0$;
    \item[H2)] $|\partial_{\xi}^{\gamma} h (\xi)| \leq D_2$ for all $\gamma=(\gamma_1,\gamma') \in \N_0 \times \N^{n-1}_0$ such that $|\gamma| \leq M$ and $|\gamma'| \geq 3$ and all $\xi \in \supp b$;
    \item[H3)] $|\partial^2_{\xi' \xi'} h (\xi) - \frac{1}{\xi_1} \mathrm{L} | < \varepsilon_\circ$ for some matrix $\mathrm{L} \in \mathrm{GL}(n-1,\R)$ with eigenvalues in $[\mu_{\min},\mu_{\max} ]$ and for all $\xi \in \supp b$.
\end{itemize}
It is noted that the above conditions on the derivatives imply, by homogeneity of $h$, that the remaining derivatives up to order $M$ are bounded by $C(D_2, \vec{\mu}, M,n,\varepsilon_\circ)$.  We denote by $\mathbf{H}(D_1,D_2,\vec{\mu},M,\varepsilon_\circ)$ the family of all phase-amplitude pairs $[h;b]$ satisfying B1), H1), H2) and H3), and define
$$
U_{[h;b]} f (x,t):= \frac{1}{(2\pi)^n} \int_{\R^n} e^{i ( x \cdot \xi + th (\xi))} b(\xi)\widehat{f}(\xi) \ud \xi.
$$

Given a phase $\varphi \in \Phi_{\mathrm{conic}}^+$,  the operator $U_{[\varphi;\eta]}$ in \eqref{eq:goal LpLq scaled wave form} can be written as a sum of $C(\varphi,n)$ operators of the type $U_{[h;b]}$ with $[h;b] \in \mathbf{H}(D_1,D_2,\vec{\mu},M,\varepsilon_\circ)$. 

\begin{proposition}\label{prop:main general}
Let $n \geq 2$ and $1 < p\leq q < \infty$, $s \in \R$ and $\varphi \in \Phi^+_{\mathrm{conic}}$. Assume that for any $D_1,D_2>0$, $\vec{\mu} \in \R^2_+$, $M>100n$, $\varepsilon_\circ>0$, and all $[h;b] \in \mathbf{H} (D_1,D_2,\vec{\mu},  M, \varepsilon_\circ)$, the inequality
\begin{equation*}
\| U_{[h;b]} f \|_{L^q(\R^n \times [-\lambda,\lambda])} \lesssim C_{\mathbf{H}} \lambda^{s} \| f \|_{L^p(\R^n)}
\end{equation*}
holds uniformly in $\lambda \geq 1$. Then
\begin{equation*}
\| U_{[\varphi;\eta]} f \|_{L^q(\R^n \times [-\lambda,\lambda])} \lesssim_{n, \varphi} \lambda^{s} \| f \|_{L^p(\R^n)}
\end{equation*}
holds uniformly in $\lambda \geq 1$.
\end{proposition}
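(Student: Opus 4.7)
The plan is to realise $U_{[\varphi;\eta]}$ as a finite sum of operators $U_{[h_\alpha;b_\alpha]}$ with $[h_\alpha;b_\alpha]\in\mathbf{H}(D_1,D_2,\vec{\mu},M,\varepsilon_\circ)$ for parameters depending only on $\varphi$ and $n$, apply the hypothesis to each summand, and conclude by the triangle inequality.

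Concretely, fix $\delta>0$ small (to be determined) and cover $\mathbb{S}^{n-1}$ by finitely many spherical caps $\{C_\alpha\}_{\alpha=1}^N$ of angular radius $\delta$ centred at points $\omega_\alpha$. Choose rotations $R_\alpha\in\mathrm{O}(n)$ with $R_\alpha e_1=\omega_\alpha$ and a smooth $0$-homogeneous partition of unity $\{\chi_\alpha\}$ subordinate to $\{C_\alpha\}$. Decompose $\eta=\sum_\alpha \chi_\alpha\eta$ and set
\[h_\alpha(\xi):=\varphi(R_\alpha\xi)-\langle R_\alpha^T\nabla\varphi(\omega_\alpha),\xi\rangle,\qquad b_\alpha(\xi):=(\chi_\alpha\eta)(R_\alpha\xi).\]
A change of variables $\xi\mapsto R_\alpha\xi$ in the frequency integral, combined with the orthogonal change $x\mapsto R_\alpha x$ in space and the elementary identity $U_{\varphi-\langle v,\cdot\rangle}f(x,t)=U_\varphi f(x-tv,t)$ absorbing the linear term, yields
\[\|U_{[\varphi;\chi_\alpha\eta]}f\|_{L^q(\R^n\times[-\lambda,\lambda])}=\|U_{[h_\alpha;b_\alpha]}g_\alpha\|_{L^q(\R^n\times[-\lambda,\lambda])}\]
for some $g_\alpha$ with $\|g_\alpha\|_{L^p(\R^n)}=\|f\|_{L^p(\R^n)}$, since rotations and $x$-translations act slice-wise as isometries on $L^q(\R^n)$.

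It remains to check that $[h_\alpha;b_\alpha]\in\mathbf{H}(D_1,D_2,\vec{\mu},M,\varepsilon_\circ)$ for a choice of parameters uniform in $\alpha$. Condition B1 is immediate from the smoothness of $\eta$ and $\chi_\alpha$, and $\supp b_\alpha\subset\Xi$ holds once $\delta$ is small enough. Condition H1 is built in: by Euler's identity $h_\alpha(e_1)=\varphi(\omega_\alpha)-\langle\nabla\varphi(\omega_\alpha),\omega_\alpha\rangle=0$, while $\nabla h_\alpha(e_1)=R_\alpha^T\nabla\varphi(\omega_\alpha)-R_\alpha^T\nabla\varphi(\omega_\alpha)=0$ by the choice of the subtracted linear term. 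Condition H2 follows from the smoothness and degree-$1$ homogeneity of $\varphi$, which force derivatives of order $\geq 2$ to be uniformly bounded on compact subsets of $\R^n\setminus\{0\}$ and to be unaffected by the linear subtraction.

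The genuinely delicate condition is H3, and this is the point where the hypothesis $\varphi\in\Phi^+_{\mathrm{conic}}$ is used. For each $\alpha$, the transverse Hessian $\partial^2_{\xi'\xi'}h_\alpha(e_1)$ is a positive definite matrix $L_\alpha$ whose eigenvalues lie in a compact interval $[\mu_{\min},\mu_{\max}]\subset(0,\infty)$ depending only on $\varphi$; by compactness of $\mathbb{S}^{n-1}$ both this control and the number of caps $N$ are uniform in $\alpha$. Since $\partial^2\varphi$ is homogeneous of degree $-1$, a Taylor comparison of $\partial^2_{\xi'\xi'}h_\alpha(\xi)$ with $\xi_1^{-1}L_\alpha$ on $\supp b_\alpha$ shows that H3 holds provided $\delta$ is chosen small enough in terms of $\varepsilon_\circ$, $\varphi$ and $n$. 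This quantitative Hessian control across the angular decomposition is the main (though essentially standard) obstacle, and it is precisely what motivates the introduction of the parametrised class $\mathbf{H}$.
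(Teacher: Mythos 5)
Your proposal is correct and takes essentially the same approach as the paper: a finite angular partition of unity on the frequency sphere, rotations placing each cap at $e_1$, subtraction of the linear Taylor term (absorbed by a space translation and removing the $t$-shift by a change of variables), followed by verification of B1), H1), H2), H3) via Euler's relation, homogeneity, and a Taylor/mean-value comparison of the Hessian on the small cap. The paper compresses the first steps into the phrase ``by a finite partition of unity and a rotation in the $\xi$-space, we may assume...'' and then verifies the quantitative conditions for a single normalized piece, which is exactly what you do explicitly cap-by-cap.
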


\begin{proof}

Let $\varphi \in \Phi^+_{\mathrm{conic}}$. By a finite partition of unity and a rotation in the $\xi$-space, we may assume that 
\begin{equation*}
    \partial_{\xi' \xi'}^2 \varphi(1,0') \quad \text{has positive eigenvalues } \quad \text{and} \quad  \supp \widehat{f} \subseteq  [1/2,2]\times [-c_\circ,c_\circ]^{n-1} \subseteq \Xi
\end{equation*}
for some small constant $0 < c_\circ \ll 1$. This gives rise to an amplitude $b$, which satisfies the condition B$1)$ for a dimensional constant $D_1>0$ depending also on $\| \eta \|_{C^M}$, $c_\circ$, and the partition of unity. Moreover, by a translation of the $x$-space, one may add and subtract linear terms to replace the phase $\varphi$ by
\begin{equation*}
h(\xi):=\varphi(\xi)-\varphi(1,0') \xi_1 - \langle \partial_{\xi'} \varphi (1,0') , \xi' \rangle = \int_0^1 (1-r) \langle \partial^2_{\xi' \xi'} \varphi (\xi_1, r \xi') \xi', \xi' \rangle \ud r.
\end{equation*}
It then suffices to verify that $h$ satisfies H1)-H3) for some choice of $D_2,M, \varepsilon_\circ$, $\mathrm{L}$ and $\vec{\mu}$. Note that the dependency on the chosen $c_\circ$ is admissible in any case. 

To show H1), we observe that $h$ is homogeneous of degree 1 and satisfies $h(1,0')=\partial_{\xi_1} h(1,0')=\partial_{\xi'} h (1,0')=0$; note that $\varphi(1,0')=\partial_{\xi_1}\varphi(1,0')$ by homogeneity of $\varphi$. 

Regarding H2) and H3), note that $\partial^{\gamma}_\xi h(\xi)=\partial^\gamma_\xi \varphi (\xi)$ for all $\gamma  \in \N_0^n$ such that $|\gamma|\geq 2$. For fixed $M>0$, the choice $D_2=\| \varphi \|_{C^M}$ clearly verifies H2). Finally, as $\partial^2_{\xi' \xi'} h(\xi)=\partial^2_{\xi' \xi'} \varphi(\xi)$, one can take $\mathrm{L}=\partial^2_{\xi'\xi'}\varphi(1,0')$ and $\mu_{\max}$ and $\mu_{\min}$ be its largest and smallest eigenvalues. By the mean value theorem and the bounds on $|\partial^\gamma_\xi h(\xi)|$ for $\gamma \in \N_0^3$ with $|\gamma|=3$, it is clear that H3) holds with $\varepsilon_\circ=O_n(c_{\circ}D_{2})$.
\end{proof}

\subsection{Reduction to a local estimate}

For fixed time $t$, the propagators $U_{[h;b]}$ can be interpreted as Fourier multiplier operators in the $x$-variable. Thus, one may write
$U_{[h;b]}f(x,t) = K_{[h;b]}(\cdot, t) \ast f (x)$ where 
$$
K_{[h;b]}(y,t) := \frac{1}{(2\pi)^n}  \int_{\R^n} e^{i (y \cdot \xi + t h(\xi))} b (\xi) \ud \xi
$$
and $\ast$ denotes the convolution in the $x$-variable. 
As $[h;b] \in \mathbf{H}(D_1, D_2,\vec{\mu},M,\varepsilon_\circ)$, there exists $C_{\mathbf{H}}>1$ such that $|\nabla_\xi ( y \cdot \xi + t h(\xi))|  \geq |y|/2 $ for $|y| \geq C_{\mathbf{H}} \lambda$ and $|t| \leq \lambda$. 
The method of non-stationary phase hence yields
\begin{equation}
\label{eq:localreductionbound}
    |K_{[h;b]}(y,t)| \lesssim_{N,\mathbf{H}} |y|^{-N}, \quad  |y| \geq C_{\mathbf{H}}\lambda, \ |t| \leq \lambda, \ N \in \N.
\end{equation}
Denoting $\Psi_\lambda^{N}:= (1+\lambda^{-2}|\cdot |^2)^{-N/2}$,
one obtains  
\begin{equation}\label{eq:local contribution refined}
|  U_{[h;b]} f (x,t)  \bbone_{B_\lambda^{n}}(x) | \leq \big( U_{[h;b]} (f \bbone_{B_{2C_{\mathbf{H}}\lambda}^n }) (x,t) +   c_{N,\mathbf{H}} \lambda^{-N} \Psi_{\lambda}^N \ast |f| (x)  \big) \bbone_{B_\lambda^{n}}(x)
\end{equation}
for $|t| \leq \lambda$ which allows for the following local reduction.

\begin{proposition}\label{prop:local reduction}
Let $n \geq 1$, $1 < p \leq q < \infty$ and $s \in \R$. Assume that a phase amplitude pair $[h;b] \in \mathbf{H}(D_1, D_2,\vec{\mu},M,\varepsilon_\circ)$ for some fixed choice of $D_1$, $D_2>0$, $\vec{\mu} \in \R^2_+$, $M>100n$, $\varepsilon_\circ>0$ is given. Assume that
\begin{equation}\label{eq:local hyp estimate}
\| U_{[h;b]} f \|_{L^q(B_\lambda^n \times [-\lambda,\lambda])} \leq C \lambda^s \| f\|_{L^p(\R^n)} 
\end{equation}
holds uniformly in $\lambda \geq 1$ and all balls $B_\lambda^n$. Then
$$
\| U_{[h;b]} f \|_{L^q(\R^n \times [-\lambda,\lambda])} \lesssim_{p,q,n,s,\mathbf{H}} C \lambda^s \| f\|_{L^p(\R^n)}.
$$
\end{proposition}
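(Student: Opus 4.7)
The plan is to decompose $\R^n$ into a bounded-overlap cover by balls of radius $\lambda$, apply the local hypothesis \eqref{eq:local hyp estimate} on each such ball with $f$ restricted to a mildly dilated ball, and finally sum the contributions using the embedding $\ell^p(\Z^n) \hookrightarrow \ell^q(\Z^n)$, which is available precisely because $p \leq q$.

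Concretely, I would fix a finitely-overlapping cover $\{B_\lambda^n(c_i)\}_{i \in I}$ of $\R^n$ by balls of radius $\lambda$ with $\{c_i\}_{i\in I}$ a suitable $\lambda$-separated lattice, and set $B_i^\star := B_{2C_{\mathbf{H}}\lambda}^n(c_i)$; the dilated collection $\{B_i^\star\}$ still has bounded overlap depending only on $n$ and $C_{\mathbf{H}}$. Applying \eqref{eq:local contribution refined} centred at each $c_i$ and raising to the $q$-th power, summing in $i$ produces
$$
\| U_{[h;b]} f \|_{L^q(\R^n \times [-\lambda,\lambda])}^q \lesssim_n \sum_i \| U_{[h;b]}(f \bbone_{B_i^\star}) \|_{L^q(B_\lambda^n(c_i) \times [-\lambda,\lambda])}^q + c_{N,\mathbf{H}}^q \lambda^{-Nq} \| \Psi_\lambda^N \ast |f| \|_{L^q(\R^n \times [-\lambda,\lambda])}^q.
$$

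For the main term, the hypothesis \eqref{eq:local hyp estimate} applied to each summand bounds it by $C^q \lambda^{sq} \|f\|_{L^p(B_i^\star)}^q$. Since $q \geq p$, the embedding $\ell^p \hookrightarrow \ell^q$ combined with the bounded overlap of $\{B_i^\star\}$ gives
$$
\Bigl( \sum_i \|f\|_{L^p(B_i^\star)}^q \Bigr)^{1/q} \leq \Bigl( \sum_i \|f\|_{L^p(B_i^\star)}^p \Bigr)^{1/p} \lesssim_{n,\mathbf{H}} \|f\|_{L^p(\R^n)},
$$
which is the desired global control. For the tail term, since $\Psi_\lambda^N \in L^r(\R^n)$ with $\|\Psi_\lambda^N\|_{L^r} \lesssim \lambda^{n/r}$ whenever $Nr > n$, Young's inequality with $1/r = 1 - 1/p + 1/q$ yields $\| \Psi_\lambda^N \ast |f| \|_{L^q(\R^n)} \lesssim \lambda^{n(1 - 1/p + 1/q)} \|f\|_{L^p(\R^n)}$, and integrating in $t \in [-\lambda,\lambda]$ contributes an extra factor $(2\lambda)^{1/q}$. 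Choosing $N$ sufficiently large in terms of $p, q, n, s$ then makes $\lambda^{-N + n(1 - 1/p + 1/q) + 1/q} \leq \lambda^s$, so the tail is absorbed into the main bound.

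The argument is essentially routine; the only structural point is that the passage from the local to the global estimate uses the hypothesis $p \leq q$ through the $\ell^p \hookrightarrow \ell^q$ embedding, and no additional obstacle is anticipated.
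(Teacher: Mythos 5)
Your proposal is correct and mirrors the paper's proof essentially step for step: the same finitely overlapping cover by $\lambda$-balls, the same application of \eqref{eq:local contribution refined} followed by the local hypothesis on each dilated ball, the same $\ell^p \subseteq \ell^q$ embedding to sum the main term, and the same Young's inequality estimate (with $1/r = 1/p' + 1/q$, yielding the exponent $n(1/p' + 1/q) + 1/q - N$) to absorb the tail. No substantive differences to report.
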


\begin{proof}
Let $\mathcal{B}_{\lambda}^n$ be a family of finitely overlapping balls $B_\lambda^n$ covering $\R^n$. By \eqref{eq:local contribution refined} and \eqref{eq:local hyp estimate} applied to $f \bbone_{B^n_{2C_{\mathbf{H}}\lambda}}$ one has
\begin{align*}
\|U_{[h:b]} f & \|_{L^q(\R^n \times [-\lambda,\lambda])} \\
& \le \Big(  \sum_{B_\lambda^n \in \mathcal{B}_\lambda^n}     \| U_{[h;b]} f \|^q_{L^q(B_\lambda^{n} \times [-\lambda,\lambda])}  \Big)^{1/q} \\
& \leq C \lambda^{s} \Big( \sum_{B_\lambda^n \in \mathcal{B}_\lambda^n} \| f \|^q_{L^p(B_{2C_\mathbf{H}\lambda}^n )} \Big)^{1/q} + c_{N, \mathbf{H}} \lambda^{-N +1/q} \| \Psi_\lambda^{N} \ast f  \|_{L^q(\R^n)} \\
& \lesssim_{\mathbf{H}} \lambda^{s} \| f \|_{L^p(\R^n)} + c_{N, \mathbf{H}} \lambda^{-N  +1/q + n(1/q + 1/p') } \| f \|_{L^p(\R^n)} \\
& \lesssim_{p,q,n,s,\mathbf{H}} \lambda^{s} \| f \|_{L^p(\R^n)}.
\end{align*}
We used the embedding $\ell^p \subseteq \ell^q$ for $ 1 \leq p \leq q \leq \infty$ and required $N >\max\{1/q+n(1/q+1/p') - s, n\}$. 
\end{proof}

Thus, by \S\ref{subsec:dyadic} and Propositions \ref{prop:main general} and \ref{prop:local reduction}, Theorem \ref{thm:smoothing all k} follows from the following spatially and frequency localised version for the quantitative class of operators.

\begin{theorem}\label{prop:main prop LpLq}
Let $n \geq 2$ and $1 < p\leq q < \infty$ be as in Theorem \ref{thm:smoothing all k}. Let $D_1$, $D_2>0$, $\vec{\mu} \in \R^2_+$, $M>100n$, $\varepsilon_\circ>0$. Then, for all $[h;b] \in \mathbf{H} (D_1, D_2,\vec{\mu},  M, \varepsilon_\circ)$ and for any $\varepsilon>0$, the inequality
\begin{equation}\label{eq:goal LpLq scaled wave local}
\| U_{[h;b]} f \|_{L^q(B_\lambda^n \times [-\lambda,\lambda])} \lesssim_{n,p,q, \mathbf{H}, \varepsilon} \lambda^{\beta + \varepsilon} \| f \|_{L^p(\R^n)}
\end{equation}
holds uniformly in $\lambda \geq 1$ and over all balls $B_\lambda^n$, where $\beta= (n-1)\big(\frac{1}{2}-\frac{1}{p}\big) + \frac{1}{q} - \sigma_{p,q}$.
\end{theorem}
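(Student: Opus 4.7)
The proof will follow a Bourgain--Guth broad-narrow induction on the spatial/temporal scale $\lambda$, combining the $k$-broad Fourier restriction estimates for perturbations of the light cone (\cite{GLMX,Schippa}) with a narrow decoupling (as developed in \S\ref{subsec:narrow decoupling}) and Lorentz rescaling (\S\ref{sec:Lorentz}). Crucially, Lorentz rescaling is carried out \emph{within} the quantitative family $\mathbf{H}(D_1,D_2,\vec{\mu},M,\varepsilon_\circ)$, which by the discussion in \S\ref{subsec:stronger} is the right class to absorb the non-conformal distortion introduced by the rescaling. I would set up the induction so that \eqref{eq:goal LpLq scaled wave local} is the induction hypothesis for \emph{all} $[h;b] \in \mathbf{H}$ at scales $\leq \lambda/2$ (with a constant uniform in $\mathbf{H}$), and prove it at scale $\lambda$.

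\textbf{Broad-narrow decomposition.} Fix a large parameter $K = K(\varepsilon)$ to be chosen and decompose $\supp b$ into angular caps $\theta$ of aperture $K^{-1}$, writing $f = \sum_\theta f_\theta$. Cover $B_\lambda^n \times [-\lambda,\lambda]$ by balls $B_{K^2}$. On each $B_{K^2}$ one has the standard dichotomy: either
\[\|U_{[h;b]}f\|_{L^q(B_{K^2})}^q \leq K^{O(1)}\,\mathrm{BL}^q_{k,A}(U_{[h;b]}f)(B_{K^2}),\]
($k$-broad norm; see \S\ref{sec:kbroad}), or there is a collection of fewer than $k$ caps $\theta$ lying in a narrow $O(K^{-1})$-sector which carries essentially the full mass, in which case
\[\|U_{[h;b]}f\|_{L^q(B_{K^2})} \lesssim K^{O(1)} \max_{\tau} \Bigl\| \sum_{\theta \subset \tau} U_{[h;b]} f_\theta \Bigr\|_{L^q(B_{K^2})},\]
with $\tau$ running over caps of aperture $K^{-1/2}$ (or whatever scale the narrow decoupling wants).

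\textbf{Broad term.} Sum the broad contributions over all $B_{K^2}$ covering $B_\lambda^n \times [-\lambda,\lambda]$. Here I would invoke the $k$-broad restriction estimate for the perturbed cone from \cite{GLMX,Schippa}, which is an $L^2 \to L^{\bar p_{n,k}}$ statement with $\bar p_{n,k} = 2(n+k+1)/(n+k-1)$, and interpolate it with trivial $L^\infty \to L^\infty$ and $L^1 \to L^\infty$ energy-type bounds to land at $L^p \to L^q$ for the pair $(\bar p(k),\bar q(k))$. The critical-line condition $1/\bar q(k) = \tfrac{n-1}{n+1}\tfrac{1}{\bar p(k)'}$ is what makes this interpolation balance against the narrow contribution at the right exponent, and the particular values of $\bar p(k), \bar q(k)$ in the statement are the outcome of this optimisation.

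\textbf{Narrow term.} Apply the narrow decoupling from \S\ref{subsec:narrow decoupling} to each narrow sum $\sum_{\theta \subset \tau} U_{[h;b]}f_\theta$, reducing to the $\ell^q(L^q)$-sum of the individual $\|U_{[h;b]}f_\theta\|_{L^q}$ up to a loss $K^\varepsilon$ (plus an exponent matching the decoupling gain). For each cap $\tau$, a Lorentz-type rescaling in the spirit of \S\ref{sec:Lorentz} converts $U_{[h;b]}f_\tau$ at scale $\lambda$ into $U_{[\tilde h;\tilde b]}\tilde f$ at the smaller scale $\lambda/K^2$, with $[\tilde h;\tilde b] \in \mathbf{H}(D_1',D_2',\vec{\mu}',M,\varepsilon_\circ')$ for admissible new parameters (this is the whole point of working with $\mathbf{H}$ and not only with $e^{it\sqrt{-\Delta}}$). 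The induction hypothesis applies, and one collects the gains.

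\textbf{Closing the induction.} Combining the two contributions yields an inequality of the schematic form
\[C(\lambda) \leq C_\varepsilon K^{A_1} \lambda^{\beta+\varepsilon/2} + C_\varepsilon K^{-A_2+O(\varepsilon)} C(\lambda/K^2),\]
with $A_1,A_2>0$ arising respectively from the broad interpolation and the decoupling/Lorentz balance. Choosing $K$ a sufficiently large but fixed power of $\lambda^\varepsilon$ (or iterating $K$ and an $\varepsilon$-removal argument) closes the induction and yields \eqref{eq:goal LpLq scaled wave local}.

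\textbf{Main obstacle.} The hardest point is the bookkeeping that makes the induction close: one must ensure that the Lorentz rescaling keeps $[\tilde h;\tilde b]$ inside the class $\mathbf{H}$ with \emph{the same} structural constants $M$ and $\varepsilon_\circ$ (up to harmless adjustments), and that the narrow-decoupling exponent combines with the broad $k$-linear exponent to produce exactly $\beta+\varepsilon$ on the critical line. The non-invariance of local smoothing under Lorentz rescaling for $e^{it\sqrt{-\Delta}}$ itself is what forces the broader class $\Phi^+_{\mathrm{conic}}$; choosing the interpolation point in the broad estimate that matches the narrow gain is what produces the specific exponents $\bar p(k),\bar q(k)$ in the statement.
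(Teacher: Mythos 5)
Your outline captures the overall broad--narrow/induction-on-scales architecture, but it has two genuine gaps that would prevent the argument from producing Theorem \ref{prop:main prop LpLq} as stated.

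First, the interpolation you propose for the broad term is not the one that works. You write that you would interpolate the $k$-broad $L^2 \to L^{\bar p_{n,k}}$ estimate from \cite{GLMX,Schippa} against ``trivial $L^\infty\to L^\infty$ and $L^1\to L^\infty$ energy-type bounds.'' This loses the entire point of the theorem: the exponents $\bar p(k),\bar q(k)$ are expressed in terms of $\bar p_n$, the best exponent for which sharp $L^p$--$L^p$ local smoothing holds for the whole class $\Phi^+_{\mathrm{conic}}$, and that dependence enters nowhere in your broad step. In the paper the second interpolation endpoint is \emph{not} trivial: it is the $k$-broad estimate
\[
\| U_{[h;b]} f \|_{\BL_{k,1}^{\bar p_n}(B_R)} \lesssim_{\varepsilon} R^{\varepsilon} R^{(n-1)(1/2-1/\bar p_n)} \| f \|_{L^{\bar p_n}},
\]
which is deduced from the hypothetical sharp $(\bar p_n,\bar p_n,1/\bar p_n)$ local smoothing estimate (Proposition \ref{cor:k-broad smoothing}). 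Interpolating the Guth/GLMX/Schippa $L^2$ input against this (rather than against $L^\infty$ or $L^1$) is exactly what makes the exponents $\bar p(k),\bar q(k)$ come out as in the statement; with your endpoints the resulting $(p,q)$ would not match and the conditional nature of the theorem (its dependence on $\bar p_n$) would disappear.

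Second, and related, you cannot simply ``interpolate'' $k$-broad quantities: they are not norms and no off-the-shelf interpolation theorem applies. The paper circumvents this by first decomposing $f = \sum_{\nu} f^\nu + e$ into $O(\log R)$ pieces with controlled level sets (Lemma \ref{lemma:decomposition RH}), each of which satisfies a reverse H\"older inequality $\|f^\nu\|_{p_1}^\alpha\|f^\nu\|_{p_2}^{1-\alpha}\leq 2\|f^\nu\|_p$; only for such functions can the $k$-broad H\"older inequality (Lemma \ref{Holder kbroad}) be upgraded to a genuine interpolation statement (Lemma \ref{lemma:interpolation kbroad}). Your plan omits this step entirely, so the broad interpolation you describe has no rigorous foundation. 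A smaller but still real omission is the initial flatness reduction: the narrow decoupling Theorem \ref{thm:narrow decoupling} only applies to $K^2$-flat phases, so the paper first performs a preliminary Lorentz rescaling into $\lambda^{\tilde\delta}$-plates to guarantee the flatness hypothesis (this also requires tracking, via Remark \ref{rem:flat}, that flatness is preserved and improved under the Lorentz rescaling inside the induction); without this bookkeeping the narrow step does not go through.
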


Note that \eqref{eq:goal LpLq scaled wave local} is translation invariant in the $x$-variables
so that the estimate over any ball $B_\lambda^n$ guarantees estimates over all balls $B_\lambda^n$. 

\section{$k$-broad estimates}\label{sec:kbroad}

In this section we recall the definition of the $k$-broad norm introduced in \cite{Guth16} and state the key $k$-broad estimates needed in the proof of Theorem \ref{thm:smoothing all k}.

\subsection{$k$-broad norm}\label{sec:plates}
Let $K \gg 1$ be a fixed large parameter. 
Fix a maximally $K^{-1}$-separated subset of $ \{ 1\} \times B^{n-1}(0,1)$ and for each $\omega$ belonging to this subset define the $K^{-1}$-\emph{plate}
\begin{equation*}
\tau := \big\{(\xi_1, \xi') \in \R \times \R^{n-1} : 1/2 \leq \xi_1 \leq 2 \,\, \textrm{ and } \,\, |\xi'/\xi_1 - \omega| \leq K^{-1} \big\}
\end{equation*}
and set $\omega_\tau:=\omega$.
The collection of all $K^{-1}$-plates forms a partition of $\Xi$ into finitely overlapping subsets. Consider a smooth partition of unity $\{\chi_{\tau}\}$  adapted to that covering, where $\chi_{\tau}(\xi):= \chi(K (\xi'/\xi_1 - \omega_\tau) )$ for some $\chi \in C_c^\infty(\R^{n-1})$; and set $\widehat{f}_\tau := \widehat{f} \chi_\tau$. It is also useful to consider $\tilde{\chi} \in C_c^\infty(\R^{n-1})$ such that $\tilde{\chi} \cdot \chi=\chi$ and define $\tilde{f}_\tau$ by $\widehat{\tilde{f}}_\tau:=\widehat{f}\tilde{\chi}_\tau$, where $\tilde{\chi}_\tau$ is defined analogously to $\chi_\tau$.

Let $B_{K^2}$ be a ball in $\R^{n+1}$ of radius $K^2$,  $\varphi \in \Phi^+_{\mathrm{conic}}$ and $b \in C^\infty_c(\R^n)$ supported in $\Xi$. For a fixed integer $A \geq 1$ and $1 \leq p < \infty$, define 
$$
\mu_{ U_{[\varphi;b]} f}(B_{K^2}):=\min_{V_1, \dots V_A \in \mathrm{Gr}(k-1,n+1)} \Big( \max_{\substack{\tau: \measuredangle(G(\tau), V_a) > K^{-2} \\ \mathrm{for} \:\: a=1, \dots, A}} \| U_{[\varphi;b]} f_\tau \|^p_{L^p(B_{K^2})} \Big)
$$
where 
\begin{itemize}
    \item[$\circ$] $\mathrm{Gr}(k-1,n+1)$ is the Grassmannian of all $(k-1)$-dimensional subspaces in $\R^{n+1}$;
    \item[$\circ$] $G(\tau)$ denotes the set of unit normal vectors
        \begin{equation*}
            G(\tau):= \Big\{ \frac{(-\nabla \varphi (\xi), 1)}{\sqrt{1+|\nabla \varphi(\xi)|}} : \xi \in \tau\Big\};
        \end{equation*}
    \item[$\circ$] $\measuredangle(G(\tau), V_a)$ denotes the smallest angle between non-zero vectors $v \in G(\tau)$ and $v' \in V_a$.
\end{itemize}
Let $\mathcal{B}_{K^2}$ be a collection of finitely-overlapping balls $B_{K^2}$ of radius $K^2$ which cover $\R^{n+1}$. Given any open set $W \subseteq \R^{n+1}$, define the $k$\textit{-broad norm} of $U_{[\varphi; b]}f$ over $W$ (or $k$\textit{-broad part} of $\| U_{[\varphi;b]} f \|_{L^p(W)}$) by
$$
\| U_{[\varphi;b]} f \|_{\BL_{k,A}^p(W)}:= \Big( \sum_{\substack{B_{K^2} \in \mathcal{B}_{K^2} \\ B_{K^2} \cap W \neq \varnothing}} \mu_{U_{[\varphi;b]} f} (B_{K^2}) \Big)^{1/p}.
$$
The quantity $\| U_{[\varphi;b]} f \|_{\BL_{k,A}^p(W)}$ is smaller
than the left-hand side of the conjectured multilinear estimate  \eqref{eq:multilinear}.
% It was introduced by Guth in \cite{Guth16}, to which we refer 
We refer to \cite{Guth16} and \cite[\S 6.2]{GHI} for further discussion regarding its relation with multilinear estimates.

Despite $\| U_{[\varphi;b]} f \|_{\BL_{k,A}^p}$ not being literally a norm, it satisfies versions of the triangle and Hölder's inequalities. The latter will be used in the forthcoming arguments.

\begin{lemma}[{\cite[Lemma 4.2]{Guth16}}]\label{Holder kbroad}
Let $1 \leq p, p_1,p_2 < \infty$ and $0 \leq \alpha \leq 1$ such that $\frac{1}{p}=\frac{\alpha}{p_1} + \frac{1-\alpha}{p_2}$. Suppose that $A = A_1 + A_2$ for integers $A_1,A_2 \geq 1$. Then
$$
\| U_{[\varphi;b]} f \|_{\BL_{k,A}^p(W)} \leq \| U_{[\varphi;b]} f \|_{\BL_{k,A_1}^{p_1}(W)}^{\alpha} \| U_{[\varphi;b]} f \|_{\BL_{k,A_2}^{p_2}(W)}^{1-\alpha}.
$$
\end{lemma}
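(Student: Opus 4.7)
The plan is to climb the three layers making up the $\BL_{k,A}^p$ norm one at a time: the inner $L^p$ integral on a ball $B_{K^2}$, the $\min$-$\max$ over $A$-tuples of subspaces, and the outer $\ell^p$-summation over the cover $\mathcal{B}_{K^2}$, applying a Hölder-type inequality at each level. First, for a fixed plate $\tau$ and ball $B_{K^2}$, the standard logarithmic convexity of $L^p$ norms combined with $\tfrac{1}{p}=\tfrac{\alpha}{p_1}+\tfrac{1-\alpha}{p_2}$ yields the pointwise bound
$$\|Uf_\tau\|_{L^p(B_{K^2})}^p \leq \|Uf_\tau\|_{L^{p_1}(B_{K^2})}^{\alpha p}\,\|Uf_\tau\|_{L^{p_2}(B_{K^2})}^{(1-\alpha)p},$$
where I write $U$ for $U_{[\varphi;b]}$.

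The decisive step is a subspace-concatenation trick. Given $\epsilon>0$, pick near-optimal lists $V^{(1)}_1,\dots,V^{(1)}_{A_1}$ and $V^{(2)}_1,\dots,V^{(2)}_{A_2}$ in $\mathrm{Gr}(k-1,n+1)$ for the $(p_1,A_1)$- and $(p_2,A_2)$-problems respectively. Their concatenation is one admissible $A$-tuple for the $(p,A)$-problem, and any plate $\tau$ making angle greater than $K^{-2}$ to all $A$ subspaces in the concatenation automatically satisfies the angle condition for each sublist separately. Combining this inclusion of admissible plate sets with the pointwise Hölder inequality above and sending $\epsilon\to 0$ yields
$$\mu_{Uf}(B_{K^2})\leq \mu_{Uf}^{(p_1,A_1)}(B_{K^2})^{\alpha p/p_1}\,\mu_{Uf}^{(p_2,A_2)}(B_{K^2})^{(1-\alpha)p/p_2},$$
where the superscripts indicate the choice of $(p,A)$ used in the definition of $\mu$.

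Finally, since $\tfrac{\alpha p}{p_1}+\tfrac{(1-\alpha)p}{p_2}=1$, the exponents $p_1/(\alpha p)$ and $p_2/((1-\alpha)p)$ are Hölder-conjugate, and the discrete Hölder inequality applied to the sum over $B_{K^2}\in\mathcal{B}_{K^2}$ meeting $W$ produces
$$\|Uf\|_{\BL_{k,A}^p(W)}^p\leq \|Uf\|_{\BL_{k,A_1}^{p_1}(W)}^{\alpha p}\,\|Uf\|_{\BL_{k,A_2}^{p_2}(W)}^{(1-\alpha)p},$$
from which the lemma follows on taking $p$-th roots. No step is analytically hard; the only conceptual point is that the angle constraints indexed by distinct $V_a$ are conjunctive, so concatenating lists of lengths $A_1$ and $A_2$ yields a feasible list of length $A_1+A_2$, not $\max\{A_1,A_2\}$. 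This additive behaviour of the parameter $A$ is exactly what makes the triple Hölder structure compatible with the $\min$-$\max$ in the definition of $\BL_{k,A}^p$.
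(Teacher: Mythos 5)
Your argument is correct and is essentially the standard proof of this lemma: logarithmic convexity of $L^p$ norms on each $B_{K^2}$, then the subspace-concatenation observation (which is precisely why the parameter $A$ is additive), then discrete Hölder over the cover $\mathcal{B}_{K^2}$ since $\tfrac{\alpha p}{p_1}+\tfrac{(1-\alpha)p}{p_2}=1$. The paper itself does not reprove this lemma but cites Guth (Lemma 4.2 of \cite{Guth16}), whose proof proceeds along the same three steps, so your blind reconstruction matches the source.
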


\subsection{$k$-broad estimates for $U_{[\varphi;b]}$} The following $k$-broad estimates for the wave propagators $U_{[\varphi;b]}$ are a key ingredient in the proof of Theorem \ref{thm:smoothing all k}. 

\begin{theorem}[{\cite[Theorem 5.3]{GLMX}}, {\cite[Theorem 1.2]{Schippa}}]\label{thm:broad estimate}
Let $D_1$, $D_2>0$, $\vec{\mu} \in \R^2_+$, $M>100n$, $\varepsilon_\circ>0$.
For any $2 \leq k \leq n+1$ and any $\varepsilon>0$, there is a large integer  $1 \ll A \lesssim K^\varepsilon$ and $d_\varepsilon>1$
so that
$$
\| U_{[h;b]} f \|_{\BL_{k,A}^p(B_\lambda)} \lesssim_{\varepsilon, \mathbf{H}} K^{d_\varepsilon} \lambda^\varepsilon \| f \|_{L^2(\R^n)}
$$
holds for any $1\leq K^{\varepsilon} \lesssim \lambda^{\varepsilon^2}$, any $[h; b] \in \mathbf{H}(D_1,D_2,\vec{\mu},M,\varepsilon_\circ)$ and any $p \geq \bar{p}_{n,k}:=\frac{2(n+k+1)}{n+k-1}$ uniformly over all balls $B_\lambda$ of radius $\lambda$.
\end{theorem}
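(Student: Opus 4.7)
The plan is to adapt Guth's polynomial partitioning scheme from \cite{Guth16} — as carried out for the light cone in \cite{OW} and extended to perturbed conic phases in \cite{GLMX, Schippa} — to the quantitative family $\mathbf{H}(D_1,D_2,\vec{\mu},M,\varepsilon_\circ)$. First I would perform a wave packet decomposition of $U_{[h;b]}f$ adapted to the surface $\Sigma_h = \{(\xi,h(\xi)) : \xi \in \Xi\}$: partition $\Xi$ into $\lambda^{-1/2}$-plates $\theta$ and tile physical space with tubes $T$ of dimensions $\lambda^{1/2}\times\cdots\times\lambda^{1/2}\times\lambda$ oriented by the Gauss map of $\Sigma_h$ over $\theta$. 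Hypotheses H1)--H3) guarantee that $\Sigma_h$ has $n-1$ non-degenerate principal curvatures of size $\sim 1$ uniformly over $\mathbf{H}$, so the standard wave packet theory applies with constants depending only on $\mathbf{H}$, and one writes $U_{[h;b]}f \approx \sum_{(\theta,T)} c_{\theta,T}\psi_{\theta,T}$ modulo rapidly decaying tails.

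Next, set up a double induction on the radius $\lambda$ and on an auxiliary $L^2$-concentration quantity for $f$, and apply polynomial partitioning to $B_\lambda$: find $P$ of degree $D=D(\varepsilon,K)$ whose zero set divides $B_\lambda$ into $\sim D^{n+1}$ cells carrying roughly equal shares of $\|U_{[h;b]}f\|_{\BL^p_{k,A}(B_\lambda)}^p$. This produces the usual dichotomy: either the cellular contribution dominates, or the mass concentrates in a thin neighbourhood of $Z(P)$. In the cellular case, each tube meets $\lesssim D$ cells, so the effective $L^2$ input on each cell is smaller and the induction on $\lambda$ closes with a gain. In the algebraic case, split tubes into those that are tangent to $Z(P)$ within angle $\lambda^{-1/2+\delta}$ and those transverse; the transverse part is handled by a transverse equidistribution estimate for $\Sigma_h$ (which is robust under the $\varepsilon_\circ$-perturbation of H3) by the analysis of \cite{Schippa}). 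The tangent part is where the $k$-broad hypothesis is essential: tubes tangent to $Z(P)$ have normals clustering near $(k-1)$-dimensional subspaces, so the broad condition (with $A$ chosen so that one of the forbidden $V_a$ catches such clusterings) forces a gain, and one iterates by induction on radius at a smaller scale.

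The main obstacle will be that $h$ is only a \emph{perturbation} of $|\xi|$: the quantitative transverse equidistribution and wave packet localisation steps, which normally exploit the exact algebraic structure of the light cone $\{|\xi|^2=\tau^2\}$, must be shown to depend only on the parameters $(D_1,D_2,\vec{\mu},M,\varepsilon_\circ)$ and not on the specific $h$. This is resolved in \cite{GLMX, Schippa} through a careful Taylor expansion around the radial direction $(1,0')$, where H1) ensures vanishing of $h$ and $\partial h$ to the relevant order and H3) controls the deviation of $\partial^2_{\xi'\xi'}h$ from the standard conic Hessian. Assembling the cellular, transverse-algebraic, and tangent-algebraic estimates via Lemma \ref{Holder kbroad} and closing the induction yields the $K^{d_\varepsilon}\lambda^\varepsilon$ bound for $p\geq \bar{p}_{n,k}$, the factor $K^{d_\varepsilon}$ absorbing the polynomial partitioning degree and the tolerated size $A\lesssim K^\varepsilon$ of the broad parameter.
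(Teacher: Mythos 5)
This theorem is not proved in the paper at all: it is imported wholesale from \cite[Theorem 5.3]{GLMX} and \cite[Theorem 1.2]{Schippa}, as the attribution in the theorem header indicates, and the only original comment the authors add is the one-line remark after the statement that the parameter $A$ can be chosen independently of the location of $B_\lambda$ because translating the ball amounts to an admissible modulation of $\widehat{f}$. There is therefore no ``paper's proof'' to compare your sketch against.

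As an account of what the cited references actually do, your sketch is broadly accurate: wave packet decomposition at scale $\lambda^{1/2}$ adapted to the surface $\{(\xi,h(\xi))\}$, polynomial partitioning at degree $D(\varepsilon,K)$, the cellular/algebraic dichotomy, the tangent-versus-transverse split near the variety, a transverse equidistribution estimate made robust to the perturbation encoded in H1)--H3), and closure by a double induction on radius and an $L^2$-concentration quantity, with the $k$-broad definition absorbing the tangent contribution by letting the clustering subspace play the role of one of the excluded $V_a$. You correctly identify the main technical issue as making the transverse equidistribution and wave-packet localisation constants uniform over $\mathbf{H}(D_1,D_2,\vec\mu,M,\varepsilon_\circ)$ rather than specific to $|\xi|$, which is what the Taylor expansion around $(1,0')$ under H1)--H3) achieves. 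A small caveat: a complete proof along these lines also needs the ``nested polynomial Wolff''/Kakeya-type inputs at small scales and a careful bookkeeping of how the parameter $A$ is consumed during the tangent iteration (so that $A\lesssim K^\varepsilon$ suffices to run $O(\log\lambda/\log K)$ inductive steps), which your sketch elides. But since the present paper delegates all of this to \cite{GLMX,Schippa}, these omissions do not bear on its correctness here.
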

Note that the parameter $A$ can be chosen independently of the location of the ball $B_\lambda$, as a translation of the ball only induces an admissible modulation on $\widehat{f}$.

\subsection{Reverse Hölder inequality: a decomposition lemma}

We are not aware of interpolation theorems that would apply to $k$-broad estimates,
but this problem can be circumvented by means of an additional decomposition of the input data if the desired $k$-broad estimate allows for $\varepsilon$-losses.
Namely, for functions satisfying a reverse Hölder type inequality, 
it is straightforward to interpolate $k$-broad estimates at the expense of increasing the parameter $A$,
and further, 
bounded $L^p$ functions can be decomposed, up to an error term, 
into finitely many pieces satisfying a reverse Hölder inequality. This last fact can be seen as a finite version of the decomposition used in Marcinkiewicz interpolation theorem.

\begin{lemma}\label{lemma:interpolation kbroad}
Let $1 \leq p, p_1,p_2, q, q_1, q_2 < \infty$ and $0 \leq \alpha \leq 1$ such that $\frac{1}{p}=\frac{\alpha}{p_1} + \frac{1-\alpha}{p_2}$ and $\frac{1}{q}=\frac{\alpha}{q_1} + \frac{1-\alpha}{q_2}$. Suppose that $A=A_1+A_2$ for integers $A_1,A_2 \geq 1$. Assume that
\begin{equation}\label{eq:interpolation hyp}
\| U_{[\varphi;b]} f \|_{\BL_{k,A_i}^{q_i}(W)} \leq C_i \| f \|_{p_i} \qquad \text{for } i=1,2.
\end{equation}
Then
$$
\| U_{[\varphi;b]} f \|_{\BL_{k,A}^{q}(W)} \leq C C_1^\alpha C_2^{1-\alpha}  \| f \|_{p}
$$
for all functions $f$ satisfying the reverse Hölder inequality 
\begin{equation}\label{eq:reverse Holder hyp}
\| f \|_{p_1}^{\alpha} \| f \|_{p_2}^{1-\alpha} \leq C \| f \|_p,
\end{equation}
\end{lemma}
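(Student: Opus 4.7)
The plan is to combine the $k$-broad Hölder inequality of Lemma \ref{Holder kbroad} with the two hypothesised estimates, and then absorb the resulting bilinear expression in $\|f\|_{p_1}$, $\|f\|_{p_2}$ using the reverse Hölder assumption. This is a short one-line chain, so I do not anticipate any serious obstacle; the main point is to verify that the exponents line up and that the additivity $A = A_1 + A_2$ is exactly what Lemma \ref{Holder kbroad} requires.

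Concretely, I would first check that the hypothesis $\frac{1}{q} = \frac{\alpha}{q_1} + \frac{1-\alpha}{q_2}$ together with the splitting $A = A_1 + A_2$ places us precisely in the setting of Lemma \ref{Holder kbroad}, so that
\[
\| U_{[\varphi;b]} f \|_{\BL_{k,A}^{q}(W)} \leq \| U_{[\varphi;b]} f \|_{\BL_{k,A_1}^{q_1}(W)}^{\alpha} \| U_{[\varphi;b]} f \|_{\BL_{k,A_2}^{q_2}(W)}^{1-\alpha}.
\]
Next I would insert the two hypothesised bounds \eqref{eq:interpolation hyp} on the right-hand side, obtaining
\[
\| U_{[\varphi;b]} f \|_{\BL_{k,A}^{q}(W)} \leq C_1^\alpha C_2^{1-\alpha}\, \| f \|_{p_1}^{\alpha} \| f \|_{p_2}^{1-\alpha}.
\]
Finally, invoking \eqref{eq:reverse Holder hyp} yields $\| f \|_{p_1}^{\alpha} \| f \|_{p_2}^{1-\alpha} \leq C \| f \|_p$, which gives the asserted estimate
\[
\| U_{[\varphi;b]} f \|_{\BL_{k,A}^{q}(W)} \leq C\, C_1^\alpha C_2^{1-\alpha}\, \| f \|_{p}.
\]

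There is nothing delicate here beyond bookkeeping of exponents: the role of the reverse Hölder hypothesis is precisely to convert the product $\|f\|_{p_1}^\alpha \|f\|_{p_2}^{1-\alpha}$ produced by Hölder on the output side into a single $L^p$ norm on the input side, something standard interpolation (Riesz--Thorin or Marcinkiewicz) would handle automatically for linear operators but which is unavailable for the sublinear $k$-broad quantity. The condition $\frac{1}{p} = \frac{\alpha}{p_1} + \frac{1-\alpha}{p_2}$ is implicitly used in the sense that the reverse Hölder inequality \eqref{eq:reverse Holder hyp} is consistent with it (one direction follows from the ordinary Hölder inequality, and \eqref{eq:reverse Holder hyp} is the nontrivial reverse that the subsequent decomposition procedure will have to produce on a case-by-case basis).
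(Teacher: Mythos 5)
Your argument is correct and is exactly the paper's proof: apply Lemma \ref{Holder kbroad}, substitute the two hypothesised bounds \eqref{eq:interpolation hyp}, and finish with the reverse H\"older hypothesis \eqref{eq:reverse Holder hyp}. Nothing to add.
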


\begin{proof}
This follows from Lemma \ref{Holder kbroad} and the hypotheses \eqref{eq:interpolation hyp} and \eqref{eq:reverse Holder hyp}.
\end{proof}

The relevant decomposition can be found,
for instance,
in the informal lecture notes \cite{Guth_lecture_notes},
but we recall the proof below for completeness.
% The assumption on compact Fourier support is only needed 
% to conclude the last inequality of item i).
% It could hence be omitted at the cost of replacing the $L^p$ norm by an $L^\infty$ norm,
% but we prefer to state the lemma in the ad hoc form in which it will be applied later.

\begin{lemma}\label{lemma:decomposition RH}
Let $1 \leq p < \infty$ and fix $R \ge 1$ and $m > 0$.
Assume that $f \in L^p(\R^n) \cap L^\infty(\R^n)$.
Then $f$ can be written as
\begin{equation}\label{eq:decomposition}
f= \sum_{\nu=0}^{ m \lfloor\log R \rfloor} f^\nu + e,
\end{equation}
where
\begin{enumerate}[i)]
    \item $\| e \|_{L^\infty(\R^n)} \lesssim  R^{-m} \| f \|_{L^\infty(\R^n)}$;
    \item $\| f^\nu \|_{L^r(\R^n)} \lesssim \| f \|_{L^r(\R^n)}$ for any $1 \leq r \leq \infty$ and all $\nu=0,\dots, m \lfloor \log R \rfloor$;
    \item if $1 \leq r, r_1, r_2 \leq \infty$ satisfy $\frac{1}{r}= \frac{\alpha}{r_1} + \frac{1-\alpha}{r_2}$ for some $0 \leq \alpha \leq 1$, then
\begin{equation*}
\| f^\nu \|_{L^{r_1}(\R^n)}^{\alpha} \| f^{\nu} \|_{L^{r_2}(\R^n)}^{1-\alpha} \leq 2 \| f^\nu \|_{L^r(\R^n)}
\end{equation*}
for all $\nu=0,\dots, m\lfloor \log R \rfloor$.
\end{enumerate}
\end{lemma}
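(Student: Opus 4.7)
The plan is to carry out a dyadic decomposition of $f$ by the size of its absolute value, as in the classical Marcinkiewicz argument. Set $\Lambda:=\|f\|_{L^\infty(\R^n)}$ (which may be assumed positive, else the statement is trivial) and define, for integers $\nu\ge 0$, the level sets
\[
E_\nu := \bigl\{x \in \R^n : 2^{-\nu-1}\Lambda < |f(x)| \leq 2^{-\nu}\Lambda\bigr\}.
\]
These sets are pairwise disjoint and $\{x : f(x)\neq 0\} \subseteq \bigcup_{\nu\ge 0} E_\nu$. Let $\nu_*:=m\lfloor \log R\rfloor$ and put
\[
f^\nu := f\,\mathbbm{1}_{E_\nu} \quad (0\le \nu\le \nu_*), \qquad e := f - \sum_{\nu=0}^{\nu_*} f^\nu.
\]
This gives the decomposition \eqref{eq:decomposition}.

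Next I would verify the three properties in sequence. For (i), by construction the error satisfies $|e(x)|\le 2^{-\nu_*-1}\Lambda$ pointwise, and $2^{-\nu_*-1}\lesssim R^{-m}$ by the choice of $\nu_*$, so $\|e\|_{L^\infty(\R^n)}\lesssim R^{-m}\|f\|_{L^\infty(\R^n)}$. For (ii) one notes that $f^\nu$ is just the pointwise restriction of $f$ to $E_\nu$, hence $|f^\nu|\le |f|$ pointwise and $\|f^\nu\|_{L^r(\R^n)}\le \|f\|_{L^r(\R^n)}$ for every $1\le r\le \infty$.

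The content of (iii) is where the dyadic choice pays off. On $E_\nu$ the function $f^\nu$ has magnitude trapped in the interval $(2^{-\nu-1}\Lambda,\, 2^{-\nu}\Lambda]$, so for every $1\le r\le \infty$,
\[
2^{-\nu-1}\Lambda\, |E_\nu|^{1/r} \;\le\; \|f^\nu\|_{L^r(\R^n)} \;\le\; 2^{-\nu}\Lambda\, |E_\nu|^{1/r},
\]
with the convention that $|E_\nu|^{1/\infty}=1$ and $2^{-\nu}\Lambda$ is an essential supremum on $E_\nu$. Given exponents $r,r_1,r_2$ and $\alpha$ as in the statement, one has $|E_\nu|^{\alpha/r_1+(1-\alpha)/r_2}=|E_\nu|^{1/r}$, hence
\[
\|f^\nu\|_{L^{r_1}}^{\alpha}\|f^\nu\|_{L^{r_2}}^{1-\alpha}
\le (2^{-\nu}\Lambda)\,|E_\nu|^{1/r}
\le 2\cdot (2^{-\nu-1}\Lambda)\,|E_\nu|^{1/r}
\le 2\,\|f^\nu\|_{L^r(\R^n)},
\]
which is exactly (iii). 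There is no real obstacle; the only point requiring attention is the sharp factor of $2$ in (iii), which is forced by the factor-of-$2$ oscillation of $|f^\nu|$ on its support and is exactly what the dyadic choice delivers.
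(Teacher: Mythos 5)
Your proof is correct and follows the same dyadic level-set (Marcinkiewicz-type) decomposition as the paper's, differing only in the cosmetic choice of normalizing the dyadic bands by $\Lambda=\|f\|_{L^\infty}$ rather than using absolute integer levels $2^{\nu-1}<|f|\le 2^\nu$. For completeness you should, as the paper does, note explicitly that $|E_\nu|<\infty$ --- an immediate consequence of $f\in L^p$ with $p<\infty$ and Chebyshev, since $|f|>2^{-\nu-1}\Lambda>0$ on $E_\nu$ --- as this is what makes the two-sided estimate $2^{-\nu-1}\Lambda\,|E_\nu|^{1/r}\le\|f^\nu\|_{L^r}\le 2^{-\nu}\Lambda\,|E_\nu|^{1/r}$ meaningful for finite $r$ in the verification of iii).
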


\begin{proof}
Let $\nu_{\circ} \in \Z$ such that $2^{\nu_\circ - 1} < \| f \|_\infty \leq 2^{\nu_\circ}$. For any $\nu \in \Z$, let $f^\nu := f \bbone_{ \{2^{\nu -1} < |f| \leq 2^{\nu} \}}$ and write  $f= \sum_{\nu=- \infty}^{\nu_{\circ}} f^\nu$. Then ii) follows by definition. 
Because $1 \le p < \infty$ and 
\begin{equation*}
    2^{\nu-1} |\supp f^\nu|^{1/p} \leq \| f^\nu \|_p \leq \| f \|_p < \infty,
\end{equation*} 
we have that $|\supp f^\nu|< \infty$.
This immediately implies iii). Furthermore, writing $f= \sum_{\nu=\nu_{\circ} - m \lfloor  \log R \rfloor }^{\nu_\circ} f^\nu + e$ for any fixed $R \geq 1$ and $m>0$, one has that $\| e \|_{\infty} \lesssim R^{-m} \| f \|_\infty $. This implies i) and \eqref{eq:decomposition} follows by relabelling $\nu$.
\end{proof}

\subsection{Local smoothing estimate for $k$-broad norms} 
Any local smoothing estimate implies a corresponding $k$-broad estimate.

\begin{proposition}\label{cor:k-broad smoothing}
Let $n \geq 1$, $2 \leq k \leq n+1$, $K \geq 2$ and $A \geq 1$. Let $\bar p_n \geq \frac{2n}{n-1}$ and assume that the $(\overline{p}_n,\overline{p}_n,1/\overline{p}_n)$ local smoothing estimate holds. Then for any $\varepsilon>0$, the inequality
$$
\| U_{[\varphi;b]} f \|_{\BL_{k,A}^{\bar p_n}(B_R^n \times [-R,R])} \lesssim_\varepsilon   R^\varepsilon R^{ (n-1)(\frac{1}{2}-\frac{1}{\bar p_n}) } \| f \|_{L^{\bar p_n}(\R^n)}
$$
also holds for any $R \geq 1$, with constant independent of $A$.
\end{proposition}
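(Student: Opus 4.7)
The plan is to dominate the $k$-broad norm by a plain sum of single-plate $L^{\bar p_n}$ norms and then invoke the local smoothing hypothesis plate by plate. The first step is to return to the definition of the $k$-broad norm and use the pointwise majorisation $\min_V\max_\tau a_\tau \leq \max_\tau a_\tau \leq \sum_\tau a_\tau$ with $a_\tau = \|U_{[\varphi;b]}f_\tau\|_{L^{\bar p_n}(B)}^{\bar p_n}$; this discards the subspaces $V_1,\ldots,V_A$ and is what will eventually produce the claimed $A$-independence of the implicit constant. Swapping the order of summation in $B$ and $\tau$ and using the bounded overlap of $\{B\}_{B\in\mathcal{B}_{K^2}}$ reduces matters to controlling
$$
\sum_\tau \|U_{[\varphi;b]} f_\tau\|_{L^{\bar p_n}(\R^n\times[-R,R])}^{\bar p_n}.
$$

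The second step applies local smoothing separately to each $f_\tau$. Since $f_\tau$ has Fourier support in the plate $\tau\subset\Xi\subset\{|\xi|\sim 1\}$, multiplying by a Littlewood--Paley bump $\eta$ adapted to this annulus is transparent and $U_{[\varphi;b]}f_\tau = U_{[\varphi;b\eta]}f_\tau$. Combining the dyadic reduction in \S\ref{subsec:dyadic} with Propositions \ref{prop:main general} and \ref{prop:local reduction} upgrades the $(\bar p_n,\bar p_n,1/\bar p_n)$ local smoothing hypothesis to the scaled estimate
$$
\|U_{[\varphi;b\eta]}g\|_{L^{\bar p_n}(\R^n\times[-R,R])}\lesssim_\varepsilon R^{(n-1)(1/2-1/\bar p_n)+\varepsilon}\|g\|_{\bar p_n},\qquad R\geq 1,
$$
and applying this with $g=f_\tau$ furnishes precisely the plate-wise bound that is needed.

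The third step collects the $O(K^{n-1})$ plates trivially. The symbol $\chi_\tau$ is a smooth bump with $K$-uniform derivative bounds (uniformly in $\tau$ after a standard affine change of variables), so its inverse Fourier transform has $K$-uniform $L^1$ norm and hence $\|f_\tau\|_{\bar p_n}\lesssim_K\|f\|_{\bar p_n}$; summing over $\tau$ costs at most a further factor of $O(K^{n-1})$, which is absorbed into the $K$-dependent implicit constant. Extracting the $\bar p_n$-th root delivers the claimed inequality. The step most in need of care is the second: one must verify, through the reductions of \S\ref{subsec:dyadic}--\S\ref{subsec:stronger}, that the hypothesis precisely provides the global-in-space estimate on the enlarged interval $[-R,R]$ with exponent $(n-1)(1/2-1/\bar p_n)+\varepsilon$ and no additional $R$-growth hidden in either the scaling or the truncation of the time interval. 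Everything else is bookkeeping.
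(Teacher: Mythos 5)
Your argument matches the paper's proof in structure and in all but the final bookkeeping step: both discard the subspaces $V_1,\dots,V_A$ via $\min_V\max_\tau \le \max_\tau \le \sum_\tau$ (which is exactly what makes the constant $A$-independent), both swap the $B_{K^2}$ and $\tau$ sums, and both apply the rescaled local smoothing hypothesis plate by plate through the reductions in \S\ref{subsec:dyadic}. The exponent $(n-1)(1/2-1/\bar p_n)+\varepsilon$ you obtain for the plate-wise bound is correct.

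The one place you diverge is the last step. You sum the $O(K^{n-1})$ plates trivially, using only the single-plate bound $\|f_\tau\|_{\bar p_n}\lesssim\|f\|_{\bar p_n}$, which yields a factor $K^{(n-1)/\bar p_n}$ after taking the $\bar p_n$-th root. The paper instead uses the $K$-uniform inequality
\[
\Big(\sum_\tau \|f_\tau\|_{\bar p_n}^{\bar p_n}\Big)^{1/\bar p_n}\lesssim\|f\|_{\bar p_n},
\]
obtained by interpolating the vector-valued bound $L^2\to\ell^2(L^2)$ (Plancherel plus finite overlap of $\{\chi_\tau\}$) against $L^\infty\to\ell^\infty(L^\infty)$ (the uniform $L^1$ bound on $\check\chi_\tau$ you already identified). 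Since the statement's $\lesssim_\varepsilon$ is meant to be $K$-free --- compare with Theorem \ref{thm:broad estimate}, where the $K$-loss is displayed explicitly as $K^{d_\varepsilon}$, and with the direct use of this proposition as a $K$-free inequality just above \eqref{eq:broad main} --- your version proves a marginally weaker estimate than the one stated. In the downstream broad--narrow analysis this would cause no harm, because the broad term already carries a $K^{D(\varepsilon)}$ loss and $K$ is later taken to be a small power of $R$ absorbed by choosing $\delta$; nonetheless, the $\ell^{\bar p_n}$ interpolation bound is the cleaner and standard way to land exactly on the stated inequality.
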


\begin{proof}
By definition of the $k$-broad norm and the embedding $\ell^{\bar p_n} \subseteq \ell^\infty$
\[
\| U_{[\varphi;b]} f \|_{\BL_{k,A}^{\bar p_n}(B_R^n \times [-R,R])} \leq \Big( \sum_{B_{K^2} \subset B_R^n \times [-R,R]} \sum_{\tau} \| U_{[\varphi;b]} f_\tau \|_{L^{ \bar p_n}(B_{K^2})}^{\bar p_n} \Big)^{1/ \bar p_n}  ,
\]
where the sum in $\tau$ ranges over all $K^{-1}-$plates. The claim follows by changing the order of summation, applying the hypothetical local smoothing estimate on $\| U_{[\varphi;b]}   f_\tau  \|_{L^{\bar  p_n}(B_{R}^n \times [-R,R])}$ (via \S\ref{subsec:dyadic}) and using the bound $(\sum_\tau \| f_\tau \|_{\bar p_n}^{\bar p_n})^{1/\bar p_n} \lesssim \| f \|_{\bar p_n}$, which follows by interpolation between the cases $p=2$ and $p=\infty$.
\end{proof}

\section{Narrow decoupling and flat phases}\label{subsec:narrow decoupling}
If the contribution to $U_{[\varphi;b]} f$ comes from plates whose normal vectors lie close to a $(k-1)$-dimensional subspace, one can essentially use the Bourgain--Demeter decoupling inequality \cite{BD2015} in $\R^{k-1}$. This phenomenon is normally referred to as \textit{narrow} decoupling. The case $\phi(\xi)=|\xi|$ was established by Harris \cite[Theorem 2.3]{Harris18} and can be used to show that the same result holds for suitably small perturbations of $|\xi|$.

\begin{definition}\label{def:flat}
Let $D_1,D_2>0$, $\vec{\mu}\in \R^2_+$, $M>100n$, $\varepsilon_\circ>0$. 
Let $L > 0$. Given a phase-amplitude pair $[h;b] \in \mathbf{H} (D_1,D_2, \vec{\mu}, M, \varepsilon_\circ)$, we say that $[h;b]$
is $L$-flat if
\begin{equation*}
    |\partial_{\xi}^\alpha h(\xi)| \lesssim L^{-1} D_2 \quad \text{ for $|\alpha'| \geq 3$, $\,|\alpha|\leq M$, $\, \xi \in \supp b$,}
\end{equation*}
where $\alpha=(\alpha_1,\alpha') \in \N_0 \times \N_0^{n-1}$.
\end{definition}

If $[h;b] \in \mathbf{H}(D_1,D_2,\vec{\mu},M,\varepsilon_\circ)$ is $L$-flat, Taylor expansion immediately reveals that
\begin{equation*}
    h(\xi_1,\xi')=\frac{\langle \partial_{\xi'\xi'}^2h(1,0')\xi',\xi' \rangle}{2 \xi_1} + L^{-1} E(\xi),
\end{equation*}
where $E$ is homogeneous of degree 1 and $|\partial^\alpha E(\xi)| \lesssim 1$ for all $|\alpha| \leq M-3$ on $\supp b$. This concept was introduced in \cite{GLMX} (see also \cite{Schippa}) to deduce the following narrow decoupling inequality.

\begin{theorem}\label{thm:narrow decoupling}
Let $n \geq 2$, $3 \leq k \leq n+1$ and $K \geq 2$. Let $D_1$, $D_2>0$, $\vec{\mu} \in \R^2_+$, $M>100n$, $\varepsilon_\circ>0$. Let $[h;b] \in \mathbf{H}(D_1,D_2,\vec{\mu},M,\varepsilon_\circ)$ be $K^2$-flat. Then for any $\varepsilon>0$ and $N>0$, the inequality
$$
\| U_{[h; b]} f \|_{L^p(B_{K^2}) } \lesssim_{\varepsilon, \mathbf{H},N} K^\varepsilon \Big( \sum_{\tau} \| U_{[h;b]}f_\tau \|^2_{L^p(w_{B_{K^2}}^{N})} \Big)^{1/2}
$$
holds for all $2 \leq p \leq \frac{2(k-1)}{k-3}$ whenever $U_{[h;b]}f= \sum_{\tau} U_{[h;b]} f_\tau$ and $\tau$ are $K^{-1}$-plates such that $\measuredangle (G(\tau), V) \leq K^{-2}$ for some $(k-1)-$dimensional vector space $V$.
\end{theorem}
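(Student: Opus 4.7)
The strategy is to reduce to Harris's narrow decoupling for the standard light cone $\phi(\xi)=|\xi|$ \cite[Theorem 2.3]{Harris18} by a linear normalisation of the quadratic part of $h$ followed by a perturbative passage through the remaining higher-order error. The $K^2$-flatness hypothesis is precisely the quantitative condition under which this two-step reduction succeeds with an $O(K^\varepsilon)$-loss.

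For the first step, Taylor expansion together with the flatness assumption yields
\[ h(\xi_1,\xi')=\frac{\langle \partial_{\xi'\xi'}^2 h(1,0')\xi',\xi'\rangle}{2\xi_1}+K^{-2}E(\xi), \]
with $E$ homogeneous of degree one and $|\partial^\alpha E|\lesssim_{\mathbf{H}}1$ on $\supp b$ for $|\alpha|\leq M-3$. An affine change of variables in $\xi'$ adapted to the positive-definite matrix $\partial_{\xi'\xi'}^2 h(1,0')$, compensated by a dual linear change in $x'$, brings the principal part to $|\xi'|^2/(2\xi_1)$, i.e.\ the phase of the standard light cone, while preserving the $K^{-1}$-plate structure, the alignment condition $\measuredangle(G(\tau),V)\leq K^{-2}$, and the $L^p$-norms up to constants depending on $\vec\mu$. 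For the second step, on the ball $B_{K^2}$ the perturbation $K^{-2}E$ contributes a phase of size $O(1)$ but its $\xi'$-derivatives of order $\geq 2$ are $O(K^{-2})$, so the symbol $e^{itK^{-2}E(\xi)}$ is essentially bounded at the plate scale. Expanding it in a rapidly convergent Fourier series whose terms diagonalise with respect to the plate decomposition, one applies Harris's theorem to each summand to obtain the $\ell^2$ square-function bound in the range $2\leq p\leq 2(k-1)/(k-3)$; summing the series and undoing the change of variables recovers the stated inequality, with the $L^p(w_{B_{K^2}}^N)$ weight arising from the usual Schwartz tail of the localisation to $B_{K^2}$.

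The main obstacle lies in the second step, namely controlling the interaction of the perturbation with the plate decomposition without disturbing the narrow alignment with $V$. The zeroth- and first-order $\xi'$-Taylor terms of $E$ at $(1,0')$ must be absorbed into $x$-modulations (which are harmless for $L^p$-norms), the quadratic part has been normalised away in the first step, and the cubic and higher parts are $O(K^{-2})$ on $B_{K^2}$ and so contribute only to the $K^\varepsilon$-loss. Once this perturbative transfer is carried out, \cite[Theorem 2.3]{Harris18} enters as a black box in the range $2\leq p\leq 2(k-1)/(k-3)$ corresponding to the sharp Bourgain--Demeter $\ell^2$-decoupling exponent for the $(k-2)$-dimensional cone in $\R^{k-1}$ cut out by the alignment condition with $V$.
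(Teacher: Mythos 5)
Your first step — pull out the quadratic part $\tilde h(\xi)=\langle \partial^2_{\xi'\xi'}h(1,0')\xi',\xi'\rangle/(2\xi_1)$ via Taylor expansion and normalise it to $|\xi'|^2/(2\xi_1)$ by an affine change of variables in $\xi'$ dual to one in $x'$ — is the same as the paper's and is fine. Where the two proofs part ways is the transfer from $\tilde h$ to $h$. The paper uses the equivalent formulation of decoupling as a statement about functions whose \emph{space-time} Fourier transform is supported in the $K^{-2}$-neighbourhood of a cone: after localising to a ball of radius $K^2$, $U_{[h;b]}f$ has Fourier support in $\Gamma_h^K$, the $K^{-2}$-neighbourhood of $\{(\xi,h(\xi))\}$, and by $K^2$-flatness one has $|h-\tilde h|\lesssim K^{-2}$ and $|\nabla h-\nabla\tilde h|\lesssim K^{-2}$ on $\supp b$, so $\Gamma_h^K$ and $\Gamma_{\tilde h}^K$ are comparable and the plates and alignment condition coincide up to constants. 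The decoupling then transfers verbatim, with nothing to sum or recombine.

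Your second step is not this argument but a multiplier-removal scheme, and as written it has gaps. First, the derivative claim is off: on $B_{K^2}$ one has $|t|\lesssim K^2$, so the extra phase is $tK^{-2}E(\xi)$ with $\xi'$-derivatives of \emph{all} orders $O(1)$, not $O(K^{-2})$; the $O(K^{-2})$ bound holds for $K^{-2}E$ alone, before multiplying by $t$, which is not what appears in the symbol $e^{itK^{-2}E(\xi)}$. Second, the assertion that a Fourier series for $e^{itK^{-2}E(\xi)}$ has "terms that diagonalise with respect to the plate decomposition" is unjustified — a Fourier expansion in $\xi$ produces modulations $e^{ij\cdot\xi}$, which become translations in $x$, not operators that commute with the frequency projections onto plates. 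Third, even if one writes $U_{[h;b]}f=\sum_j c_j(tK^{-2})\,U_{[\tilde h;b]}f(\cdot+j,\cdot)$ and applies the $\tilde h$-decoupling to each translate, the right-hand side involves $U_{[\tilde h;b]}f_\tau$ rather than $U_{[h;b]}f_\tau$, so one needs a second, reverse Fourier expansion on each plate and over the full support of the weight $w_{B_{K^2}}^N$ (where $|tK^{-2}|$ is no longer bounded by $1$, so the uniform decay of the coefficients is not automatic); none of this is addressed. With enough care a multiplier-removal argument of this flavour can likely be made to work, but it is substantially more delicate than what you present; the paper's reformulation in terms of Fourier support on a neighbourhood of the cone sidesteps all of these issues at once, and you should adopt it.
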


In order to see this, let $[h;b] \in \mathbf{H}(D_1,D_2,\vec{\mu},M,\varepsilon_\circ)$ be $K^2$-flat and let $\tilde{h}$ denote its second order Taylor polynomial. By a suitable change of variables, the result of Harris for $\phi(\xi)=|\xi|$ can be extended to the phase $\tilde{h}$. For the extension to $h$, let $\Gamma_h^{K}$ denote the $K^{-2}$ neighbourhood of the cone generated by $h$ and $\Gamma_{\tilde{h}}^{K}$ its analogue for $\tilde{h}$. Because of the $K^{2}$-flat hypothesis, the objects $\Gamma_{h}^{K}$ and $\Gamma_{\tilde{h}}^{K}$ are indistinguishable from one another and, moreover, the normals to $\Gamma_h^{K}$ lie in the $K^{-2}$-neighbourhood of the normals to $\Gamma_{\tilde{h}}^{K}$. Hence the decoupling inequality extends to the $K^{2}$-flat case (via its equivalent formulation in terms of the Fourier support lying on a neighbourhood of a cone).

\section{Lorentz rescaling}\label{sec:Lorentz}

\subsection{Lorentz rescaling} 
We will next prove that the target estimate \eqref{eq:goal LpLq scaled wave local} self-improves if the support of $\widehat{f}$ is small. This is achieved by applying a standard Lorentz rescaling argument.

Before turning to the proof, it is instructive to compare the situation with Fourier restriction estimates, which are of the type \eqref{eq:goal LpLq scaled wave local} but with the right-hand side replaced by $\| \widehat{f} \|_p$. Such estimates are invariant under rotations in $\R^{n+1}$, and one can then apply the rotation $L(\xi_1, \xi', \tau )= (\xi_{1} + \tau, \xi', \tau - \xi_1 )$, where $(\xi_1, \xi', \tau) \in \R \times \R^{n-1} \times \R$, which maps the forward light cone $ \Gamma := \{ (\xi, \tau) \in \R^{n} \times \R : \tau= |\xi|\}$ into the \textit{tilted} cone $ \Gamma_{\mathrm{par}}:=\{ (\xi_1, \xi', \tau) \in \R \times \R^{n-1} \times \R : \tau = |\xi'|^2/\xi_1\}$. Thus, Fourier restriction estimates for the phase $\varphi(\xi)=|\xi|$ follow from those for $h_{\mathrm{par}}(\xi)=|\xi'|^2/\xi_1$. The phase function $h_{\mathrm{par}}$ satisfies the special property of being invariant under Lorentz rescaling, due to its perfect parabolic structure.

The invariance under Lorentz rescaling is no longer true for local smoothing estimates for $e^{i t \sqrt{-\Delta}}$, as they are not rotationally invariant in $\R^{n+1}$. However, the class of phase functions in $\mathbf{H}(D_1, D_2,\vec{\mu},M,\varepsilon_\circ)$ is invariant under rescaling: given a generic $h$ in this class, the rescaled phase $\tilde{h}$ is different from  the original $h$, but  still satisfies H1), H2) and H3). This is the underlying reason for introducing the larger family of wave-propagators $U_\varphi$ when proving estimates for $e^{i t \sqrt{-\Delta}}$ via an induction-on-scales argument.

\begin{lemma}\label{lemma:Lorentz}
Let $n \geq 2$ and $1 < p\leq q < \infty$ be as in Theorem \ref{thm:smoothing all k}. Let $D_1$, $D_2>0$, $\vec{\mu}\in \R^2_+$, $M>100n$, $\varepsilon_\circ>0$ and $L>0$. Assume \eqref{eq:goal LpLq scaled wave local} holds for all $[h;b] \in \mathbf{H} (D_1, D_2,\vec{\mu}, M, \varepsilon_\circ)$ that are $L$-flat and all $\lambda \geq 1$. Let $K \geq 2$ be sufficiently large, depending on $n$ and $M$, and $\tau$ be a $K^{-1}$-plate. Then
$$
\| U_{[h;b]} f_\tau \|_{L^q(B_R^n \times [-R,R])} \lesssim_{\mathbf{H}, \varepsilon}  K^{\frac{n+1}{q} - \frac{n-1}{p} } (R/K^2)^{\beta + \varepsilon} \| \tilde{f}_\tau \|_{L^p(\R^n)},
$$
where $f_\tau$ and $\tilde{f}_\tau$ are defined as in \S\ref{sec:plates}.
\end{lemma}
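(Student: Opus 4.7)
The strategy is a Lorentz-type rescaling that inflates the $K^{-1}$-plate $\tau$ to unit scale in frequency. After a harmless rotation in $\xi$-space (which preserves the class $\mathbf{H}$ up to adjusting parameters) I would assume $\tau$ is centred at $(1,0')$, so that its support lies in $\{\xi_1 \in [1/2,2],\ |\xi'| \lesssim K^{-1}\}$. The plan is then to substitute $\xi = (\tilde\xi_1, K^{-1}\tilde\xi')$ in the oscillatory integral defining $U_{[h;b]} f_\tau$, pair it with the dual physical-space scaling $\tilde x_1 = x_1$, $\tilde x' = K^{-1} x'$, $\tilde t = K^{-2} t$, and define
\[\tilde h(\tilde\xi) := K^{2}\, h(\tilde\xi_1, K^{-1}\tilde\xi'), \qquad \tilde b(\tilde\xi) := b(\tilde\xi_1, K^{-1}\tilde\xi')\chi_\tau(\tilde\xi_1, K^{-1}\tilde\xi').\]
A direct computation then yields the operator identity
\[U_{[h;b]} f_\tau(x,t) = K^{-(n-1)} U_{[\tilde h;\tilde b]} F(\tilde x, \tilde t), \qquad \widehat F(\tilde\xi) = \widehat{\tilde f_\tau}(\tilde\xi_1, K^{-1}\tilde\xi'),\]
with $F(\tilde x) = K^{n-1}\tilde f_\tau(\tilde x_1, K\tilde x')$ satisfying $\|F\|_p = K^{(n-1)/p'}\|\tilde f_\tau\|_p$.

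\textbf{Admissibility of the rescaled pair.} The next step is to verify that $[\tilde h; \tilde b]$ belongs to $\mathbf{H}(D_1', D_2, \vec\mu, M, \varepsilon_\circ')$ for admissible parameters, and crucially that $\tilde h$ is $L$-flat. Homogeneity of degree $1$ and condition H1) follow from chain-rule calculations together with $h(1,0')= \partial_\xi h(1,0') = 0$. Condition H3) is preserved with the same matrix $\mathrm L$ because $\partial^2_{\tilde\xi'\tilde\xi'}\tilde h(\tilde\xi) = \partial^2_{\xi'\xi'}h(\tilde\xi_1, K^{-1}\tilde\xi')$. The key flatness bound stems from the scaling identity $\partial^\alpha_{\tilde\xi}\tilde h = K^{2-|\alpha'|}\partial^\alpha_\xi h$, which for $|\alpha'| \geq 3$ gives a factor of $K^{-1}D_2$; choosing $K$ large in terms of $L$ then forces $L$-flatness of $\tilde h$. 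An analogous check controls $\tilde b$.

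\textbf{Applying the hypothesis.} Changing variables in the $L^q$ norm produces
\[\|U_{[h;b]} f_\tau\|^q_{L^q(B_R^n \times [-R,R])} = K^{-(n-1)q + (n+1)} \int_{\tilde\Omega} |U_{[\tilde h;\tilde b]} F|^q \ud\tilde x\, \ud\tilde t,\]
where $\tilde\Omega \subseteq \{|\tilde x_1| \leq R,\ |\tilde x'| \leq R/K\} \times [-R/K^2, R/K^2]$. I would cover $\tilde\Omega$ by a finitely overlapping family of $\sim K^{n+1}$ cylinders $B^n_\lambda \times [-\lambda,\lambda]$ with $\lambda = R/K^2$. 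On each such cylinder, the standing hypothesis applied to $[\tilde h;\tilde b]$, combined with the non-stationary-phase localisation \eqref{eq:localreductionbound}, gives a bound of $\lambda^{(\beta+\varepsilon)q}\|F\|^q_{L^p(CB^n_\lambda)}$ modulo rapidly decaying tails. Summing with the embedding $\ell^p \subseteq \ell^q$ (available since $p \leq q$) yields
\[\sum_B \|F\|^q_{L^p(CB^n_\lambda)} \leq \Big(\sum_B \|F\|^p_{L^p(CB^n_\lambda)}\Big)^{q/p} \lesssim \|F\|_p^q,\]
and the arithmetic identity $-(n-1) + (n+1)/q + (n-1)/p' = (n+1)/q - (n-1)/p$ recovers the claimed $K$-exponent.

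\textbf{Main obstacle.} Two points require care: (i) establishing the $L$-flatness of $\tilde h$, which dictates how large $K$ must be taken so that the hypothesis can in fact be invoked for $[\tilde h;\tilde b]$; and (ii) avoiding an extraneous loss of $K^{(n+1)/q}$ from the anisotropic covering of $\tilde\Omega$ by $\sim K^{n+1}$ balls. The latter is the main arithmetic obstacle and is circumvented exactly as in the proof of Proposition \ref{prop:local reduction}, by combining the localised form of the hypothesis with the $\ell^p \subseteq \ell^q$ embedding rather than a naive triangle inequality summation.
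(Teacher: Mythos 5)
Your overall shape of the argument (anisotropic frequency rescaling on the plate, dual physical rescaling, verify the rescaled pair is in $\mathbf{H}$ and flat, invoke the hypothesis on cylinders of radius $R/K^2$, patch with $\ell^p\subseteq\ell^q$) is the right one, and your exponent arithmetic $-(n-1)+(n+1)/q+(n-1)/p' = (n+1)/q-(n-1)/p$ is correct. However, there is a genuine gap at the very first step: the reduction to $\omega_\tau = (1,0')$ ``by a harmless rotation in $\xi$-space'' does not work, and your subsequent verifications of H1)--H3) are only valid in the absence of that rotation.

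The class $\mathbf{H}$ is not rotation-invariant. Condition H1) pins the vanishing of $h$ and $\nabla h$ to the specific point $(1,0')$; if $R$ is a rotation taking $(1,0')$ to a multiple of $(1,\omega)$, then $(h\circ R)(1,0')$ is a positive multiple of $h(1,\omega)$, which is nonzero for a generic $h\in\mathbf{H}$ when $\omega\neq 0$, so the rotated phase fails H1). Conditions H2) and H3) are also anisotropic: they single out pure $\xi'$-derivatives and the $\partial^2_{\xi'\xi'}$ block of the Hessian, and a rotation that mixes $\xi_1$ with $\xi'$ destroys this splitting — after composing with $R$, the quantities $\partial^{\gamma}_{\tilde\xi}(h\circ R)$ with $|\gamma'|\geq 3$ become linear combinations of derivatives of $h$ that include $|\beta'|<3$, which are only bounded by the larger homogeneity constant, not by $D_2$. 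Even allowing ``adjusted parameters'' would be fatal to the induction on scales, which requires the class parameters to be preserved exactly. The paper handles the off-center plate instead by a shear: the frequency change of variables $(\xi_1,\xi') = (\eta_1,\eta_1\omega + K^{-1}\eta')$ keeps $\xi_1$ fixed (so primed derivatives transform cleanly), combined with subtracting the first-order Taylor polynomial of $h$ at $(1,\omega)$ to restore H1); on the physical side this is compensated by $\mathrm{D}_K\circ\Upsilon_\omega$ rather than by a pure dilation. Your proof needs this shear-plus-linear-subtraction in place of the rotation.

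A smaller imprecision: you say choosing $K$ large in terms of $L$ ``forces'' $L$-flatness of $\tilde h$. In fact $L$-flatness of $\tilde h$ is inherited from the assumed $L$-flatness of $h$ via the identity $\partial^\alpha_{\tilde\xi}\tilde h = K^{2-|\alpha'|}\partial^\alpha_\xi h$ (and indeed one gets $LK$-flatness, cf.\ Remark~\ref{rem:flat}); the requirement that $K$ be large depends only on $n$ and $M$, to ensure H2)--H3) are preserved, not on $L$.
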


\begin{proof}
Let $(1,\omega) \equiv (1, \omega_\tau)$ be the center of the $K^{-1}$-plate $\tau$ upon which $\widehat{f}_\tau$ is supported. Perform the change of variables $(\xi_1,\xi')=(\eta_1, \eta_1 \omega + K^{-1} \eta')$, so that $h(\xi)=h(\eta_1, \eta_1 \omega + K^{-1} \eta')$. By a Taylor expansion around $(\eta_1, \eta_1 \omega)$ and the homogeneity of $h$, $h(\xi)$ equals to
\begin{align}\label{eq:Taylor}
\eta_1 h(1, \omega) + K^{-1} \langle \partial_{\xi'} h (1,\omega) ,  \eta' \rangle + K^{-2}\int_0^1 (1-r) \langle  \partial^2_{\xi' \xi'} h (1,  \omega + r K^{-1}\frac{ \eta'}{\eta_1})  \eta',  \eta'  \rangle  \frac{\ud r}{\eta_1}.
\end{align}
Let $\tilde{h}(\eta)$ be the function associated with the integral above,
\begin{equation}\label{eq: h tilde}
\tilde{h}(\eta)= K^2 h(\eta_1, \eta_1 \omega + K^{-1} \eta') - K^2 \eta_1 h(1, \omega) - K \langle \partial_{\xi'} h (1,\omega),  \eta' \rangle.
\end{equation}
Let $\mathrm{D}_{K}, \Upsilon_\omega: \R \times \R^{n-1} \times \R \to \R^{n+1}$ be linear functions given by
\begin{align*}
\mathrm{D}_{K}(x_1,x',t) & =(x_1, K^{-1}x', K^{-2}t) \\
\Upsilon_\omega(x_1,x',t) & =(x_1 + \langle x', \omega \rangle  + t h (1,\omega),x'+t \partial_{\xi'} h(1,\omega),t).
\end{align*}
Note that
\begin{equation*}
U_{[h;b]} f_{\tau}(x,t) = U_{[\tilde{h};\tilde{b}]}  g (\mathrm{D}_K \circ \Upsilon_\omega (x_1,x',t))
\end{equation*}
where\footnote{Technically, one should divide $\tilde{b}$ and multiply $g$ by a dimensional constant to ensure that $\tilde{b}$ satisfies B1). This only causes an admissible dimensional constant loss in the resulting inequality.}
\begin{align}
    \widehat{g}(\eta)&:=\widehat{\tilde{f}_\tau}(\eta_1, \eta_1 \omega + K^{-1} \eta')  K^{-{(n-1)}} \notag \\
    \tilde{b}(\eta)&:=b(\eta_1,\eta_1\omega +  K^{-1} \eta')\chi(\eta') \label{eq:b tilde def}.
\end{align}
We then have
\begin{equation}\label{eq:rescaling op}
\| U_{[h;b]} f_\tau \|_{L^q(B_R^n \times [-R,R])} = K^{\frac{n+1}{q}} \| U_{[\tilde{h};\tilde{b} ]} g  \|_{L^q( \mathrm{D}_K \circ \Upsilon_\omega (B_R^n \times [-R,R]))}
\end{equation}
and
\begin{equation}\label{eq:rescaling funct}
\| g \|_{L^p(\R^n)} = K^{- \frac{n-1}{p}}\| \tilde{f}_\tau \|_{L^p(\R^n)}.
\end{equation}
Let $\mathcal{B}_{R/K^2}$ be a finitely overlapping collection of cylinders of the form
\[
B_{R/K^2}\equiv B_{R/K^2}^n \times [-R/K^2, R/K^2]
\]
such that
$$  \mathrm{D}_K \circ \Upsilon_\omega (B_R^n \times [-R,R]) \subseteq \bigcup_{B_{R/K^2} \in \mathcal{B}_{R/K^2}} B_{R/K^2}.$$
Assuming temporarily that $[\tilde{h};\tilde{b}]$ belongs to $\mathbf{H}(D_1, D_2,\vec{\mu},M, \varepsilon_\circ)$ and is $L$-flat, 
we may use the hypothesis \eqref{eq:goal LpLq scaled wave local} on each $B_{R/K^2}$ to deduce
$$
\| U_{[\tilde{h};\tilde{b}]} g \|_{L^q(B_{R/K^2})} \lesssim_{n,p,q,\mathbf{H}, \varepsilon} (R/K^2)^{\beta+\varepsilon} \| g \|_{L^p(\R^n)}.
$$
By Proposition \ref{prop:local reduction} (at scale $R/K^2$),
this implies
$$
\| U_{[\tilde{h};\tilde{b}]} g \|_{L^q(\mathrm{D}_K \circ \Upsilon_\omega (B_R^n \times [-R,R]))} \lesssim_{n,p,q,\mathbf{H}, \varepsilon} (R/K^2)^{\beta+\varepsilon} \| g \|_{L^p(\R^n)}.
$$
Combining this with \eqref{eq:rescaling op} and \eqref{eq:rescaling funct} allows us to conclude
$$
 \| U_{[h;b]} f_\tau \|_{L^q(B_R^n \times [-R,R])} \lesssim K^{\frac{n+1}{q} - \frac{n-1}{p} } (R/K^2)^{\beta + \varepsilon } \| f_\tau \|_{L^p(\R^n)}.
$$

It remains to verify that $[\tilde{h};\tilde{b}]  \in \mathbf{H}(D_1, D_2,\vec{\mu},M, \varepsilon_\circ)$ and that is $L$-flat. It follows from the expression of $\tilde{b}$ in \eqref{eq:b tilde def} that it is supported in $\Xi$ and satisfies B1) (see footnote 2). Regarding the phase $\tilde{h}$, it is clear from its definition in \eqref{eq: h tilde} that it is homogeneous of degree 1. Moreover, either \eqref{eq: h tilde} and the homogeneity of $h$, or simply the integral expression \eqref{eq:Taylor} quickly reveal that 
\[\tilde{h}(1,0')=\partial_{\eta_1} \tilde{h} (1,0')=\partial_{\eta'}\tilde{h}(1,0')=0.  \]
This verifies that H1) holds.
Furthermore, note that \eqref{eq: h tilde} yields
$$
\partial_{\eta'}^{\gamma'} \tilde{h} (\eta) = K^{-(|\gamma'|-2)} \partial_{\xi'}^{\gamma'} h (\eta_1, \eta_1 \omega + K^{-1} \eta')
$$
for any $\gamma' \in \N_0^{n-1}$ such that $|\gamma'| \geq 2$. This and the assumptions on $h$ immediately imply H2) and H3) for $\tilde{h}$, provided that $K \geq 2$ is sufficiently large depending on $M$ and $n$. Moreover, as $[h;b]$ is $L$-flat, the above identity also implies that $[\tilde{h};\tilde{b}]$ is $L$-flat.
\end{proof}

\begin{remarkn}\label{rem:flat}
We emphasise that if $[h;b]$ is $L$-flat, then the rescaled pair $[\tilde{h};\tilde{b}]$ is $LK$-flat,
as can be read from the proof above.
This fact will be referred to later.
\end{remarkn}

\section{Proof of Theorems \ref{thm:LpLq} and \ref{thm:smoothing all k}} \label{sec:proof}

As discussed in Section \ref{sec:initial}, Theorem \ref{thm:smoothing all k} is a consequence of Theorem \ref{prop:main prop LpLq}, which can be further reduced to an equivalent statement for flat functions.
Indeed, fix\footnote{If $\lambda \lesssim 1$,  Theorem \ref{thm:smoothing all k} holds from the kernel estimate \eqref{eq:localreductionbound}.} $\lambda \gg 1$, $\varepsilon>0$, and a pair $[h;b] \in \mathbf{H}(D_1, D_2,\vec{\mu},M, \varepsilon_\circ)$. Let $ \tilde{\delta}=  \frac{\varepsilon}{2(n-1)}>0$ and decompose the support of $b$ into $\lambda^{-\tilde{\delta}}$-plates. Applying the Lorentz rescaling Lemma \ref{lemma:Lorentz} to each piece, the rescaled phase-amplitude pairs are in $\mathbf{H}(D_1,D_2,\vec{\mu},M,\varepsilon_\circ)$ and are $\lambda^{\tilde{\delta}}$-flat (see Remark \ref{rem:flat}). As there are $O(\lambda^{\tilde{\delta}(n-1)})$ many plates, \eqref{eq:goal LpLq scaled wave local} follows if we can prove that for all $[h;b] \in \mathbf{H}(D_1,D_2,\vec{\mu},M,\varepsilon_\circ)$ that are $\lambda^{\tilde{\delta}}$-flat, the inequality
\begin{equation}\label{eq:flat reduction}
\| U_{[h;b]} f \|_{L^q(B_\lambda^n \times [-\lambda,\lambda])} \lesssim_{n,p,q, \mathbf{H}, \varepsilon} \lambda^{\beta + \varepsilon/2} \| f \|_{L^p(\R^n)}
\end{equation}
holds uniformly in $\lambda \geq 1$ 
and over all balls $B_\lambda^n$. Here again 
\[\beta= (n-1)\Big(\frac{1}{2}-\frac{1}{p}\Big) + \frac{1}{q} - \sigma_{p,q}.\]
To this end, we introduce the following definition.
\begin{definition}
\label{def:qquantity}
Given $\varepsilon>0$, $R \geq 1$, $1 < p \leq q < \infty$ and $\beta= (n-1)\big(\frac{1}{2}-\frac{1}{p}\big) + \frac{1}{q} - \sigma_{p,q}$, let $Q_{\varepsilon,p,q} (R)$ denote the infimum over all constants $C \geq 0$ such that the inequality
$$
\| U_{[h;b]} f \|_{L^q(B_R)} \leq C R^{\beta + \varepsilon} \| f \|_{L^p(\R^n)}
$$
holds for all cylinders $B_R = B_R^n \times [-R,R]$, all phase/amplitude pairs $[h;b] \in \mathbf{H}(D_1, D_2,\vec{\mu},M, \varepsilon_\circ)$ that are $\lambda^{\varepsilon/(n-1)}$-flat, all $\lambda \geq R$ and all functions $f \in L^p(\R^n)$.
\end{definition}

Thus, in order to verify \eqref{eq:flat reduction}, it suffices to show that for any $\varepsilon>0$,
\begin{equation}\label{eq:induction quantity}
Q_{\varepsilon, p , q}(R) \le C(\varepsilon)
\end{equation}
for all $R\geq 1$. 
The constant $C(\varepsilon)$ is allowed to depend on the quantities listed 
in Definition \ref{def:qquantity}, namely $p$, $q$, $n$, $D_1$, $D_2$, $\vec{\mu} \in \R^2_+$, $M$ and $\varepsilon_\circ$.
We do not track dependencies on them from this point on and, 
whenever necessary,
we refer to them as the data.
The proof of \eqref{eq:induction quantity} will proceed via induction on scales. 

By the kernel estimate \eqref{eq:localreductionbound}, the inequality \eqref{eq:induction quantity} holds for small values $R \lesssim_\varepsilon 1$. This allows us to induct on the quantity $R$, with $R \lesssim_\varepsilon 1$ as a base case. In particular, one can state the following induction hypothesis.
\begin{hypothesis}
There exists a constant $\bar C_\varepsilon$ depending only on $\varepsilon$ and the data such that
$$
Q_{\varepsilon, p, q}(R') \leq \bar C_\varepsilon
$$
holds for all $1 \leq R' \leq R/2$.
\end{hypothesis} 

We shall next show that $Q_{\varepsilon, p, q}(R) \leq \bar C_\varepsilon$. Let $f \in L^p(\R^n)$. By the support properties of $b$, we can assume that $\supp \widehat{f}  \subseteq B(0,10)$, which by Young's convolution inequality implies $f \in L^\infty(\R^n)$. Thus, by Lemma \ref{lemma:decomposition RH} and the triangle inequality one has
\begin{equation}\label{eq:norm balanced dec}
    \| U_{[h;b]} f \|_{L^q(B_R)} \leq \sum_{\nu=0}^{m \lfloor \log R \rfloor} \| U_{[h;b]}  f^\nu  \|_{L^q(B_R)} + \| U_{[h;b]} e \|_{L^q(B_R)}
\end{equation}
for any $m>0$. By Hölder's inequality and the kernel estimate \eqref{eq:localreductionbound}, one has
$$
\| U_{[h;b]} e \|_{L^q(B_R)} \lesssim R^{\frac{n+1}{q}} \| \Psi_{R}^{n+1} \ast |e| \|_{L^\infty (B_R^n)} \lesssim R^{\frac{n+1}{q} + n} \| e \|_{L^{\infty}(\R^n)}.
$$
Using that $\| e \|_{L^\infty(\R^n)} \lesssim R^{-m} \| f \|_{L^\infty(\R^n)} \lesssim R^{-m} \| f \|_{L^p(\R^n)}$, one readily obtains
\begin{equation}\label{eq:error balanced}
\| U_{[h;b]} e \|_{L^q(B_R)} \lesssim  \| f \|_{L^p(\R^n)}
\end{equation}
provided $m>\frac{n+1}{q} + n$.

\subsection*{Broad-narrow analysis} We shall next perform a Bourgain--Guth broad--narrow analysis (cf.\ \cite{BG,Guth16}) on each function $f^\nu$.
By Theorem \ref{thm:broad estimate}, there exists an integer $1 \ll A \lesssim_\varepsilon K^{\varepsilon/2}$ independent of the ball $B_R$ such that
$$
\| U_{[h;b]} f^\nu \|_{\BL_{k,A}^{\bar p_{n,k}}(B_R)} \lesssim_{\varepsilon} K^{d_\varepsilon} R^{\varepsilon /2} \| f^\nu \|_{L^2(\R^n)}
$$
provided $K^{\varepsilon/2} \lesssim R^{\varepsilon^2/4}$. Here
$K \geq 2$ is the parameter used to define the $k$-broad norm and will be specified later. Moreover, by Proposition \ref{cor:k-broad smoothing}
$$
\| U_{[h;b]} f^\nu \|_{\BL_{k,1}^{\bar p_n}(B_R)} \lesssim_{ \varepsilon} R^{\varepsilon/2} R^{(n-1)\left( \frac{1}{2} - \frac{1}{\bar p_n} \right) } \| f^\nu \|_{L^{\bar p_n}(\R^n)},
$$
where $\bar p_n$ is the (hypothetical) smallest exponent 
for which sharp local smoothing holds.

By iii) of Lemma \ref{lemma:decomposition RH}, $f^\nu$ satisfies the reverse Hölder inequality \eqref{eq:reverse Holder hyp}.
Hence one can interpolate the above inequalities via Lemma \ref{lemma:interpolation kbroad} and use ii) of Lemma \ref{lemma:decomposition RH} to obtain
\begin{equation}\label{eq:broad main}
\| U_{[h;b]} f^\nu \|_{\BL_{k,A+1}^q (B_R)} \lesssim_{ \varepsilon} K^{d_\varepsilon} R^{\varepsilon/2} R^{(n-1)\left( \frac{1}{2} - \frac{1}{p} \right) } \| f \|_{L^p(\R^n)}
\end{equation}
for 
\[\frac{1}{p}-\frac{1}{\bar p_n}=(n+k+1) \Big( \frac{1}{2} - \frac{1}{\bar p_n} \Big) \Big(\frac{1}{p} - \frac{1}{q} \Big),\] 
$2 \leq p \leq \bar p_n$ and $\bar p_{n,k} \leq q \leq \bar p_n$. 

Consider next the decomposition of the support of $b$ into $K^{-1}$-plates $\tau$.
For each $B_{K^2}\subset B_R$, let $V_1,\dots, V_{A+1}$ be a collection of $(k-1)$-dimensional subspaces in $\R^{n+1}$ attaining the minimum
$$
\min_{V_1, \dots V_{A+1} \in \mathrm{Gr}(k-1,n+1)} \Big( \max_{\tau \not \in V_a } \| U_{[h;b]}   f_\tau^\nu \|^q_{L^q(B_{K^2})} \Big),
$$
where $\tau \not \in V_a$ stands for $\measuredangle(G(\tau), V_a) > K^{-2}$ for all $a=1, \dots, A+1$. 
Then
$$
\int_{B_{K^2}} |U_{[h;b]} f^\nu |^q \lesssim K^{C} \max_{\tau \not \in V_a} \int_{B_{K^2}} |U_{[h;b]}  f_\tau^\nu |^q + \sum_{a=1}^{A+1} \int_{B_{K^2}} \big| \sum_{\tau \in V_a} U_{[h;b]} f_\tau^\nu \big|^q.
$$
When summing over $B_{K^2} \subset B_R$, the first term corresponds to the \textit{broad} part $\| U_{[h;b]} f^\nu \|_{\BL_{k,A+1}^{q}(B_R)}^q$, which satisfies the estimate \eqref{eq:broad main}. The second term corresponds to the \textit{narrow} part, for which the plates accumulate on a $(k-1)$-dimensional subspace. Provided that $K^2 \leq \lambda^{\varepsilon/(n-1)}$, the pair $[h;b]$ is $K^2$-flat. By Theorem \ref{thm:narrow decoupling} and Hölder's inequality, for every $\delta'>0$ and $N>0$,
$$
\int_{B_{K^2}} |\sum_{\tau \in V_a} U_{[h;b]}  f_\tau^\nu|^q \lesssim_{\delta',N} K^{q \delta'} \max \{1,K^{q(k-3)(\frac{1}{2} - \frac{1}{q} )}\} \sum_{\tau \in V_a} \int_{\R^{n+1}} |U_{[h;b]}   f_\tau^\nu|^q w_{B_{K^2}}^N
$$
holds for all $2\leq q \leq \frac{2(k-1)}{k-3}$ (with $2 \leq q \leq \infty$ for $k \in \{2,3\}$), using Hölder's inequality in the sum and noting that there are $O(K^{k-3})$ $K^{-1}$-plates $\tau \in V_a$. As we have already taken advantage of the reduced number of plates, we can further control the sum over $\tau \in V_a$ by the sum over all $ K^{-1}$-plates $\tau$. Thus, summing over $a$ and the balls $B_{K^2} \subset B_R$,
\begin{align*}
     \Big ( \sum_{B_{K^2} \subset B_{R}} & \sum_{a=1}^{A+1} \int_{B_{K^2}} |\sum_{\tau \in V_a} U_{[h;b]}  f_\tau^\nu|^q \Big)^{1/q} \\
     & \lesssim_{\delta',N} A^{1/q} K^{\delta'} \max \{1, K^{(k-3)(\frac{1}{2} - \frac{1}{q} )}\} \Big( \sum_{\tau: K^{-1}-\mathrm{plates}} \|U_{[h;b]}  f_\tau^\nu \|_{L^q( w_{B_R}^{N})}^q \Big)^{1/q},
\end{align*}
where we used $\sum_{B_{K^2} \subset B_R} w_{B_{K^2}}^N \lesssim w_{B_R}^N$. 

Next, note that for any $\delta>0$ and $\tilde{N}>0$ one has
$$
\| U_{[h;b]}   f_\tau^\nu \|_{L^q( w_{B_R}^N)} \lesssim_{\delta,\tilde{N}}  \| U_{[h;b]}   f_\tau^\nu \|_{L^q(R^\delta B_{ R})} + R^{-\tilde{N}} \| f_\tau^\nu \|_{L^p(\R^n)}.
$$
This follows from the kernel estimate \eqref{eq:localreductionbound} and the decay of the weight $w_{B_R}^N$ on $\R^{n+1}\backslash R^\delta B_R$ provided $N$ is chosen sufficiently large depending on $\delta, \tilde{N}, p, q$ and $n$.
Applying a trivial decoupling (via the triangle inequality) of $K^{-1}$-plates $\tau$ into $(KR^{\delta/2})^{-1}$-plates $\tau'$, we obtain
\begin{multline}
     \Big ( \sum_{B_{K^2} \subset B_{R}}  \sum_{a=1}^{A+1} \int_{B_{K^2}} |\sum_{\tau \in V_a} U_{[h;b]}  f_\tau^\nu|^q \Big)^{1/q}  \\
     \lesssim %_{\delta,\delta',\tilde{N}} A^{1/q} 
     R^{\delta\frac{(n-1)}{2}} K^{\delta'+\varepsilon} \max \{1, K^{(k-3)(\frac{1}{2} - \frac{1}{q} )}\} 
      \Big( \sum_\tau \sum_{\tau'\cap \tau \neq \varnothing } \|U_{[h;b]}   (f_{\tau}^\nu)_{\tau'}  \|_{L^q(  R^\delta B_R)}^q \Big)^{1/q} \\
     +R^{-\tilde{N}} \Big( \sum_{\tau} \|f_{\tau}^\nu \|_{L^p( \R^n )}^q \Big)^{\frac{1}{q}} \label{eq:narrow triv dec}
\end{multline}
where we have used $A \lesssim K^{\varepsilon/2}$ and the constant depends on $\delta,\delta',\tilde{N}$.
Using the Lorentz rescaling in Lemma \ref{lemma:Lorentz} and the induction hypothesis $Q_{\varepsilon, p, q}(R') \leq \bar C_{\varepsilon}$ with 
\[R'=R^{1+\delta}/(R^\delta K^2)=R/K^2 \leq R/2,\]
one has
\begin{multline*}
    \Big( \sum_\tau \sum_{\tau'\cap \tau \neq \varnothing } \|U_{[h;b]}   (f_{\tau}^\nu)_{\tau'}  \|_{L^q(  R^\delta B_R)}^q \Big)^{1/q} \\ \lesssim \bar C_{\varepsilon}  K^{\frac{n+1}{q} - \frac{n-1}{p}}  ( R/K^2)^{\beta + \varepsilon } \Big( \sum_{\tau} \sum_{\tau' \cap \tau \neq \varnothing} \| (\widetilde{f}_{\tau}^\nu)_{\tau'} \|_{L^p(\R^n)}^q \Big)^{1/q}.
\end{multline*}
By the embedding $\ell^p \subseteq \ell^q$ for $p \leq q$; the bounds
\[
(\sum_{\tau} \| f_{\tau}^\nu \|_p^p)^{1/p} \lesssim \| f^\nu \|_p
\quad \text{and} \quad
\Big( \sum_{\tau} \sum_{\tau' \cap \tau \neq \varnothing} \| (\widetilde{f}_{\tau}^\nu)_{\tau'} \|_{L^p(\R^n)}^p \Big)^{1/p} \lesssim \| f^\nu \|_p,\]
which follow by interpolation between $p=2$ and $p=\infty$; ii) of Lemma \ref{lemma:decomposition RH}, that is, $\|f^\nu \|_{L^p(\R^n)} \leq \| f \|_{L^p(\R^n)}$; and choosing $\tilde{N}$ sufficiently large, one has that the right-hand side of \eqref{eq:narrow triv dec} is controlled by
\begin{equation}\label{eq:narrow main}
 C \bar C_{\varepsilon} \max \{1, K^{(k-3)(\frac{1}{2} - \frac{1}{q} )}\} K^{\frac{n+1}{q} - \frac{n-1}{p} - 2\beta - \varepsilon + \delta'} R^{\delta\frac{(n-1)}{2}} R^{\beta + \varepsilon} \| f \|_{L^p(\R^n)}.
\end{equation}

\subsection*{Closing the induction}
By \eqref{eq:norm balanced dec}, the estimates \eqref{eq:broad main} and \eqref{eq:narrow main} for each $\nu=0, \dots, m \lfloor \log R \rfloor$, where $m > \frac{n+1}{q}+n$, and the error estimate \eqref{eq:error balanced}, one obtains
\begin{equation}
\label{eq:conclusion}
    \| U_{[h;b]} f  \|_{L^q (B_R )} \leq  \log R \cdot
    \left( \I + \II \right) \| f \|_{L^p(\R^n)} , 
\end{equation}
    where
    \begin{align*}
    \I &= D(\varepsilon) K^{D(\varepsilon)}  R^{\varepsilon/2} R^{(n-1)(\frac{1}{2}-\frac{1}{p})},\\
    \II &= D(\delta,\delta') \bar C_{\varepsilon} \max \{1, K^{(k-3)(\frac{1}{2} - \frac{1}{q} )}\} K^{\frac{n+1}{q} - \frac{n-1}{p} - 2\beta - \varepsilon + \delta'} R^{\delta \frac{(n-1)}{2}} R^{\beta + \varepsilon}
\end{align*}
and 
\[\frac{1}{p}-\frac{1}{\bar p_n}=(n+k+1) \Big( \frac{1}{2} - \frac{1}{\bar p_n} \Big) \Big(\frac{1}{p} - \frac{1}{q} \Big),\]
$2 \leq p \leq \bar p_n$, $\bar p_{n,k} \leq q \leq \bar p_n$. 
Here $D(\cdot)$ is a constant depending on the data as described after Definition \ref{def:qquantity} and the arguments in the parenthesis but not on $K$ and not on $R$. It is allowed to change from line to line.

We need to show that $\log R \cdot (\I + \II) \leq \bar C_\varepsilon R^{\beta+\varepsilon}$. This will require the exponent of $K$ in the second term of the right-hand side above to be negative. It is useful to note that
\begin{equation}\label{eq:exponent relation 1}
    \frac{n+1}{q} - \frac{n-1}{p} - 2\beta = \frac{n+1}{q} - \frac{n-1}{p'} -2 \Big( \frac{1}{q} - \bar \sigma \Big),
\end{equation}
as 
\[\beta=(n-1)\Big(\frac{1}{2}-\frac{1}{p}\Big) + \Big(\frac{1}{q}-\bar \sigma\Big).\]
We next analyse what choices of $p$, $q$ and $k$ allow us to close the induction and lead to sharp estimates on the critical line and off the critical line respectively.

\subsubsection*{\underline{Sharp regularity estimates on the critical line}} 
If $\bar \sigma=\sigma_{p,q}=1/q$ and $(1/p,1/q)$ is on the critical line
\begin{equation}
\label{eq:criticalline}
\frac{1}{q}=\frac{n-1}{n+1} \frac{1}{p'},
\end{equation}
the expression in \eqref{eq:exponent relation 1} equals to $0$. Thus, the exponent of $K$ in the second term of the right-hand side of \eqref{eq:conclusion} can only be negative if $k=2$ or $k=3$. Note that the critical line \eqref{eq:criticalline} meets the interpolation line 
\begin{equation}
    \label{eq:interpolationline}
    \frac{1}{p}-\frac{1}{\bar p_n}=(n+k+1) \Big( \frac{1}{2} - \frac{1}{\bar p_n} \Big) \Big(\frac{1}{p} - \frac{1}{q} \Big)
\end{equation}
at
\begin{align*}
p(k)&= \frac{2 \bar p_n \left(n^2+kn-1\right)-4 n (n+k+1)}{(n-1) \bar p_n (n+k+1)-2 (k (n-1)+n (n+1))},
\nonumber \\
\quad q(k)&= \frac{2  \bar p_n \left(n^2+kn-1\right)-4 n (n+k+1)}{(n-1)  \bar p_n (n+k-1)-2 (n-1) (k+n)},
\end{align*}
which satisfy $2 \leq p(k) \leq \bar p_n$, $\bar p_{n,k} \leq q(k) \leq \bar p_n$ for $2 \leq k \leq n+1$.
As $q(k)$ decreases with $k$, the best estimates are obtained using $k=3$, which is the highest possible $k$ that is still admissible. It is then our goal to show that
\begin{equation}\label{eq:p(3)q(3)}
        \| U_{[h;b]} f  \|_{L^{q(3)} (B_R)} \leq  \bar C_\varepsilon   R^{\beta + \varepsilon} \| f \|_{L^{p(3)}(\R^n)}.
\end{equation}

To this end, consider \eqref{eq:conclusion}
and use the bounds $\log R \lesssim_{\varepsilon} R^{\varepsilon/4}$ for the first term $\I$ and $\log R \lesssim_{\delta} R^{\delta (n-1)/2}$ for the second term $\II$; recall that $R \gtrsim_\varepsilon 1$. As $\beta=(n-1)(\frac{1}{2}-\frac{1}{p})$, the inequality \eqref{eq:conclusion} now reads 
    \begin{multline*}
        \| U_{[h;b]} f  \|_{L^{q(3)} (B_R)} \\ \leq  \Big( D (\varepsilon) K^{D(\varepsilon)} R^{\beta+3\varepsilon/4}  + D(\delta,\delta') \bar C_{\varepsilon}  K^{ \delta' - \varepsilon } R^{\delta(n-1)} R^{\beta + \varepsilon} \Big) \| f \|_{L^{p(3)}(\R^n)}.
    \end{multline*}
    Choose $\delta'=\varepsilon/2$ so that $K^{\delta'-\varepsilon}=K^{-\varepsilon/2}$. Then choose $K=K_0 R^{2\delta(n-1)/\varepsilon}$ for sufficiently large $K_0 \geq 1$, depending on $\delta$, $\varepsilon$ and the data so that $D(\delta,\delta')K_0^{-\varepsilon/2} \leq 1/2$. Then
    \begin{equation*}
        \| U_{[h;b]} f  \|_{L^{q(3)} (B_R)} \leq  \Big( D (\varepsilon) K^{D(\varepsilon)} R^{\beta-\varepsilon/4}  +  \bar C_{\varepsilon}/2 \Big) R^{\beta + \varepsilon}  \| f \|_{L^{p(3)}(\R^n)}
    \end{equation*}    
    with $K=K_0 R^{2\delta(n-1)/\varepsilon}$. We choose 
\[ 2 \delta =\min\Big\{\frac{\varepsilon^2}{4D(\varepsilon) (n-1)}, \frac{\varepsilon^2}{4(n-1)^2} \Big\}.\] 
Finally,  choose $\bar C_{\varepsilon}$ large enough so that
\[D(\varepsilon) K_0^{D(\varepsilon)} \leq \bar C_{\varepsilon}/2,\] which is admissible as the parameter $K_0$ only depends on $\varepsilon$ and the data. One then concludes that 
\begin{equation*}
        \| U_{[h;b]} f  \|_{L^{q(3)} (B_R)} \leq \bar C_{\varepsilon} R^{\beta + \varepsilon} \| f \|_{L^{p(3)}(\R^n)}
    \end{equation*} 
using the first value in the definition of $\delta$, which is the desired estimate \eqref{eq:p(3)q(3)}. This closes the induction provided we can verify the flatness condition $K^2 \leq \lambda^{\varepsilon/(n-1)}$ and the condition $K^{\varepsilon/2} \lesssim R^{\varepsilon^2/4}$ required by the broad estimate. Note that by the second entry in the definition of $\delta$, and using $R \leq \lambda$,
\[K^2=K_0^2 R^{4\delta(n-1)/\varepsilon}
\leq K_0^2 R^{\varepsilon/2(n-1)} 
\leq (K_0^2 \lambda^{-\varepsilon/2(n-1)}) \lambda^{\varepsilon/(n-1)} 
\leq \lambda^{\varepsilon/(n-1)}\] 
as $\lambda \gg 1$ and the parameter $K_0$ only depends on $\varepsilon$ and the data. 
Similarly, by the first entry in the definition of $\delta$, 
\[K^{\varepsilon/2}=K_0^{\varepsilon/2} R^{\delta(n-1)} \leq K_0^{\varepsilon/2} R^{\varepsilon^2/8} \lesssim R^{\varepsilon^2/4}\] 
as we are only concerned with $R \gtrsim_\varepsilon 1$. 

Therefore a sharp $(p(3),q(3),\sigma_{p(3), q(3)})$ local smoothing estimate holds, and a further interpolation with the elementary $(1,\infty,0)$ estimate yields the  estimates $(p,q,\sigma_{p,q})$ on the  critical line \eqref{eq:criticalline} for all $q \geq q(3)$.
%$= \frac{2  \bar p_n \left(n^2+3n-1\right)-4 n (n+4)}{(n-1) ( (n+2) \bar p_n - 2n -6  )}$. 
This proves Theorem \ref{thm:LpLq}.

\subsubsection*{ \underline{Sharp regularity estimates away from the critical line}} Consider first the case
\[\frac{1}{q} > \frac{n-1}{n+1} \frac{1}{p'}, \quad 2 \leq p \leq \frac{2n}{n-1}, \quad \bar \sigma = \sigma_{p,q}=\frac{(n-1)}{2}\Big(\frac{1}{p'}-\frac{1}{q}\Big),\]
with $p \leq q$, $p'<q$. For this data, the expression \eqref{eq:exponent relation 1} is identically zero for any pair $(p,q)$. Thus, the only possibilities for the exponent of $K$ to be negative in \eqref{eq:conclusion} are again $k=2$ and $k=3$. The estimates arising repeating the above analysis are implied by the interpolation of the sharp estimates $(p,q,\sigma_{p,q})$ for $q \geq q(3)$ in the critical line \eqref{eq:criticalline}
with the fixed-time estimate $p=q=2$. This proves the sharp bounds in the region $\mathfrak{T}_n \backslash \overline{P_1P_2}$ in Theorem \ref{thm:smoothing all k}.
    
Consider next the case
    \[\frac{1}{q} < \frac{n-1}{n+1} \frac{1}{p'}, \quad  \frac{2n}{n-1} < q, \quad \bar \sigma = \sigma_{p,q}=\frac{1}{q}, \quad p \leq q.\]
    Using the relation \eqref{eq:exponent relation 1}, one has that the exponent of $K$ in the second term in \eqref{eq:conclusion} is negative provided that
    \begin{equation}\label{eq:exponents 2}
    (k-3)\Big(\frac{1}{2}-\frac{1}{q}\Big) + \frac{n+1}{q} - \frac{n-1}{p'} \leq 0.
    \end{equation}
    We are thus allowed to use higher values for $k \geq 3$. For $(p,q)$ in the interpolation line \eqref{eq:interpolationline},
    the condition \eqref{eq:exponents 2} is saturated at $(p,q)=(\bar p (k), \bar q (k))$, where
    \begin{align*}
    \bar p(k) &= \frac{ 2 \bar p_n \left(2n^2 +k(n+4) -k^2+3 n-5\right)-4 \left(n+k+1\right)\left(2n-k+3\right)}{  \bar p_n \left(n+k+1\right)\left(2n-k+1\right) -2 \left(2n^2 +k(n-2) -k^2 +5 n+9\right)}
, \nonumber \\
\bar q(k) &= \frac{ 2 \bar p_n \left(2n^2 +k(n+4) -k^2+3 n-5\right)-4 \left(n+k+1\right)\left(2n-k+3\right)}{ \bar p_n \left(n+k-1\right)\left(2n-k+1\right) -2 \left( 2n^2 +kn -k^2+n+3\right)}.
    \end{align*}
The pair of exponents $(\bar p(k),\bar q(k))$ satisfies the constraints
\[2 \leq \bar p (k) \leq \bar p_n, \quad \bar p_{n,k} \leq \bar q (k) \leq \bar p_n, \quad \frac{2n}{n-1} < \bar q (k) < \infty\]
for any integer $2 \leq k \leq n+1$. Arguing as for the critical line, one can close the induction and obtain the sharp estimate
    $$
        \| U_{[h;b]} f  \|_{L^{\bar q(k)} (B_R)} \leq  \bar C_\varepsilon   R^{\beta + \varepsilon} \| f \|_{L^{\bar p(k)}(\R^n)}.
    $$
    This yields a set of $(p,q,\sigma_{p,q})$ local smoothing estimates for each $2 \leq k \leq n+1$, which can all be interpolated together with $(1/\bar p_n, 1 / \bar p_n)$ and the fixed time estimates at $P_0=(0,0)$, $P_1=(1,0)$ from \eqref{eq:LpLq fixed time Us} to yield sharp estimates for $(1/p,1/q) \in \mathfrak{P}_n \backslash \overline{P_0P_1}$. This proves Theorem \ref{thm:smoothing all k}.

\begin{remark}
Despite the focus of this paper on sharp regularity local smoothing estimates, 
it is also natural to explore
what are the (not necessarily sharp) regularity estimates
that the use of higher degrees of multilinearity would imply on the critical line \eqref{eq:criticalline}. In order to close the induction in the proof of Theorem \ref{thm:LpLq}, one requires the exponent of $K$ in the second term in the right-hand side of \eqref{eq:conclusion} to be negative. By \eqref{eq:exponent relation 1}, this requires
    $$
    \bar \sigma \leq \sigma(k):= \frac{k-1}{2q(k)} - \frac{k-3}{4}.
    $$
Note that $0 < \sigma(k) \leq 1/q(k)$ if $2 \leq q(k) < \frac{2(k-1)}{k-3}$. Controlling $R^{(n-1)(\frac{1}{2}-\frac{1}{p})} \leq R^{\beta}$, one can then argue as above to obtain, for each fixed $k$, a $(p(k),q(k),\sigma(k))$ local smoothing estimate. However, with the input $\bar p_n=\frac{2(n+1)}{n-1}$, such estimates are worse than those obtained by interpolation of the case $k=3$ and the known local smoothing estimates for all $\sigma < 1/(2p)$ at $p=q=\frac{2n}{n-1}$, which themselves follow by interpolation from the sharp estimates at $\bar p_n=\frac{2(n+1)}{n-1}$ and $L^2$. Indeed, that interpolation yields $(p,q,\sigma^*)$ estimates on the critical line \eqref{eq:criticalline} with
\[\sigma^*=\frac{n+5}{4} - \frac{n^2+4n+1}{2q(n-1)}, \quad \frac{2n}{n-1} \leq q \leq \frac{2(n^2+6n-1)}{(n-1)(n+5)}.\]
One can hence verify that for $q=q(k)$, one has $\sigma^*>\sigma(k)$ if $k>3$. Better non-sharp regularity results could be obtained by interpolation with the most recent results at $p=q=\frac{2n}{n-1}$ from \cite{GLMX}. The computation is left to the interested reader.
\end{remark}

\bibliography{Reference}

\bibliographystyle{abbrv}

\end{document}